\author{Kohei Tanaka \footnote{Department of Mathematical Sciences, Shinshu University, Matsumoto, 390-8621, Japan}}
\title {A model structure on the category of small categories for coverings}
\newtheorem{theorem}{Theorem}[section]
\theoremstyle{definition}
\newtheorem{example}[theorem]{Example}
\newtheorem{definition}[theorem]{Definition}
\newtheorem{notation}[theorem]{Notation}
\theoremstyle{result}
\newtheorem{proposition}[theorem]{Proposition}
\newtheorem{lemma}[theorem]{Lemma}
\newtheorem{corollary}[theorem]{Corollary}
\newtheorem{main1}{Main theorem}
\begin{document}

\maketitle

A{\footnotesize BSTRACT}. 
We consider a model structure on the category of small categories,  
which is intimately related to the notion of coverings and fundamental groups of small categories.
Fibrant objects coincide with groupoids, 
and the fibrant replacement is the groupoidification.
\\

\textit{Mathematics Subject Classification} :  18G55, 55U35

\textit{Key words and phrases} : Model categories, Small categories, Coverings


\section{Introduction}

The category $\mathbf{Cat}$ of small categories has 
a couple of interesting model structures.
One of them is introduced by Joyal and Tierney \cite{JT91}, \cite{Rez00}, which is Quillen adjoint to the 
Anderson model structure \cite{And78}, \cite{CGT04} 
 on the category $\mathbf{Grd}$ of groupoids. 
On the other hand,  Thomason found another model structure \cite{Tho80}
which is Quillen equivalent to the Kan model structure \cite{Qui67},\cite{Hov99} 
on the category $\mathbf{SSet}$ of simplicial sets and the Quillen model structure \cite{Qui67}, \cite{Hov99} 
on the category $\mathbf{Space}$ of topological spaces. 
These model categories are related to each other by the following functors
\[ 
\xymatrix{
\mathbf{Grd} \ar@<1ex>[r]^{i} & \mathbf{Cat}  \ar@<1ex>[l]^{\pi} \ar@<1ex>[r]^{N}   &\mathbf{SSet} \ar@<1ex>[r]^{|-|} \ar@<1ex>[l]^{c}  & \mathbf{Space} \ar@<1ex>[l]^{S} 
}
\]
where $i$, $\pi$, $N$, $c$, $|-|$ and $S$ are the inclusion, the groupoidification, 
the nerve, the categorization, the realization and  
the singular simplicial set functor, respectively. In \cite{Qui68}, 
Quillen shows that Serre fibrations in $\mathbf{Space}$ are related to 
Kan fibrations in $\mathbf{SSet}$ by $|-|$ and $S$.
Similarly, Gabriel and Zisman define coverings in $\mathbf{SSet}$, 
and show that coverings in $\mathbf{Space}$ are related to 
coverings in $\mathbf{SSet}$ by $|-|$ and $S$ in \cite{GZ67}.
On the other hand, they also define coverings of groupoids, 
and show that coverings in $\mathbf{Space}$ are related to 
coverings in $\mathbf{Grd}$ 
by the fundamental groupoid functor in \cite{May99}, \cite{GZ67}.

Now, we consider coverings in $\mathbf{Cat}$ related to coverings in 
$\mathbf{SSet}$ and $\mathbf{Grd}$ by the above functors. 
The aim of this article is to show that we can treat coverings in $\mathbf{Cat}$ in terms of model categories.

\begin{main1}[Theorem \ref{them}] 
The category of small categories becomes a model category with
weak equivalences the weak $1$-equivalences,
cofibrations the injection on the set of objects,
fibrations the fibered and cofibered in groupoids.

\end{main1}

We call the above model structure the ``$1$-type model structure'' and 
denote the category of small categories with the model structure by $\mathbf{Cat}_{1}$.
This is the left Bousfield localization \cite{Hir03} of the Joyal-Tierney model structure on $\mathbf{Cat}$. 
The $1$-type model structure on $\mathbf{SSet}$ and $\mathbf{Space}$ 
is already described in \cite{DP95}, and 
these are equivalent to $\mathbf{Cat}_{1}$ as model categories.
We show that $\mathbf{Cat}_{1}$ is related to the notion of 
coverings in $\mathbf{Cat}$ and the groupoidification.

\begin{main1}
The model category $\mathbf{Cat}_{1}$ has the following properties.
\begin{enumerate}
\item An object is fibrant if and only if it is a groupoid [Corollary \ref{fibrant}].
\item A functor is a covering in the category of small categories if and only if it is a fibration with discrete fibers [Proposition \ref{discrete}].
\item Universal covers in $\mathbf{Cat}$ and the groupoidification functor can be described in terms of the functorial factorization [Corollary \ref{poi}, Corollary \ref{un}].
\end{enumerate}

\end{main1}

This paper is organized as follows. In section $2$, we study the notion of fundamental groups and coverings in $\mathbf{Cat}$. 
We construct a Galois-type correspondence between them, namely, 
subgroups of the fundamental group of $C$ are classified by coverings over $C$.

In section $3$, we introduce $\mathbf{Cat}_{1}$ as the left Bousfield localization 
of the Joyal-Tieney model structure.
We show that a covering is a special case of fibration in $\mathbf{Cat}_{1}$ 
and construct a functorial factorization, directly.
Universal covers in $\mathbf{Cat}$ and the groupoidification are described in terms of the factorization.
Finally, we investigate the relations between $\mathbf{Cat}_{1}$ and other model categories.

\begin{notation}\label{notation}
We use the following notations for categories, 
\begin{enumerate}
\item $\phi$ is the empty category,
\item $*$ is the category with a unique object $*$ and the only identity morphism,
\item $[n]$ is the poset $0<1<2 < \cdots <n$ regarded as a category 
$0 \longrightarrow 1 \longrightarrow 2 \longrightarrow \cdots \longrightarrow n,$
\item $S^{0}$ is the category with two objects $\{0,1\}$ and the only identity morphisms,
\item $S^{1}$ is the category with two objects and having 
two parallel morphisms between them $0 \rightrightarrows 1$,
\item $I_{n}$ is the category $0 \longrightarrow 1 \longleftarrow 2 \longrightarrow \cdots \longleftarrow n$ (case $n$ even),
\item $CS^{1}$ is the category consisting of three objects $0 \longrightarrow 1 \rightrightarrows 2$ where $0$ is the initial object,
\item $S^{\infty}$ is the simply connected groupoid with two objects $0 \leftrightarrows 1$.
\end{enumerate}

We write the set of objects by $C_{0}$ and the hom-set from $a$ to $b$ by $C(a,b)$ for a category $C$.
A pointed category $(C,x)$ is a pair of a small category $C$ and an object $x$ of $C$.
Also we use the next notations for set of functors,
\begin{enumerate}
\item $K=\{k : * \longrightarrow S^{\infty}\}$ where $k(*)=0$,
\item $I = \{ \phi \longrightarrow * \ ,\  i : S^{0} \longrightarrow [1] \ ,\ i' : S^{1} \longrightarrow [1]\}$ where 
both $i$ and $i'$ are the identity maps on the set of objects,
\item $J_{1}=\{ j_{1} : * \longrightarrow [1] \ ,\ j_{1}^{op} : * \longrightarrow [1]^{op}\}$ where $j_{1}(*)=0$,
\item $J_{2} = \{j_{2} : I_{2} \longrightarrow [2] \ ,\ j_{2}^{op} : I_{2}^{op} \longrightarrow [2]^{op}\}$ where 
$j_{2}(0)=0$, $j_{2}(1)=2$ and $j_{2}(2)=1$. 
\item $J_{3}=\{ j_{3} : CS^{1} \longrightarrow [2]\ ,\ j_{3}^{op} : (CS^{1})^{op} \longrightarrow [2]^{op}\}$ where $j_{3}$ 
is the identity map on the set of objects,
\item $J = J_{1} \cup J_{2} \cup J_{3}$.
\end{enumerate}
\end{notation}


\section{Fundamental groups and coverings of small categories}


\subsection{Fundamental groups of small categories}

Minian defined the fundamental group $\pi_{1}(C,x)$ for a pointed category $(C,x)$ as 
the colimit of the set of strong homotopy classes of functors from interval categories $I_{n}$ to $C$ for $n \geqq 0$ \cite{Min02}.
It is the endmorphism group of the groupoidification of $C$. 
The groupoidification is an operation to add formal inverses to all morphisms in a small category. 
A concrete construction is the following.

\begin{definition}\label{pi}
For a small category $C$, let $(C^{I};a,b)$ be the set of functors
$$(C^{I};a,b) =\{\alpha : I_{n} \longrightarrow C \ |\ \alpha(0)=a , \alpha(n)=b, n \geqq 0\}$$
for $a,b \in C_{0}$, and define a relation $\sim$ on $(C^{I};a,b)$ by 

\begin{enumerate}
\item $(c \stackrel{f}{\longrightarrow} d \stackrel{f}{\longleftarrow} c) \sim c \sim (c \stackrel{f}{\longleftarrow} d \stackrel{f}{\longrightarrow} c)$,
\item $(c \stackrel{f}{\longrightarrow} d \stackrel{=}{\longleftarrow} d \stackrel{g}{\longrightarrow} e) \sim (c \stackrel{g \circ f}{\longrightarrow} e)$, $(c \stackrel{f}{\longleftarrow} d \stackrel{=}{\longrightarrow} d \stackrel{g}{\longleftarrow} e) \sim (c \stackrel{f \circ g}{\longleftarrow} e)$, 
\item $(c \stackrel{f}{\longrightarrow} b \stackrel{=}{\longleftarrow} b) \sim (c \stackrel{f}{\longrightarrow} b)$.
\end{enumerate}

Define a small category $\pi (C)$  by
$\pi(C)_{0} = C_{0}$ and $\pi(C)(a,b)=(C^{I};a,b)/\sim$.
The composition is given by concatenation. All of the morphisms
are invertible, hence $\pi(C)$ is a groupoid and it gives a functor $\pi : \mathbf{Cat} \longrightarrow \mathbf{Grd}$.

For a pointed small category $(C,x)$, define the fundamental group as the endmorphism group of $\pi(C)$
$$\pi_{1}(C,x) := \pi (C)(x,x).$$
It is easy to show that the relation $\sim$ on  $(C^{I};a,b)$ is equal to the one defined by strong homotopy in \cite{Min02}.
And the fundamental group coincides with the Minian's. 
We say that $C$ is connected if $\pi_{0}(C)=*$, and 
simply connected if it is connected and $\pi_{1}(C,x)$ is trivial for any $x \in C_{0}$.

If $C$ is connected,  it is obvious that $\pi_{1}(C,c) \cong \pi_{1}(C,d)$ for all $c,d \in C_{0}$, in which case
 we write $\pi_{1}(C,x)$ simply as $\pi_{1}(C)$.

\end{definition}

The groupoid $\pi(C)$ is called the groupoidification of $C$. It is the minimal groupoid containing $C$ as a subcategory.

\begin{proposition}\label{Grd}
The functor $\pi$ is left adjoint to the inclusion $\mathbf{Grd} \longrightarrow \mathbf{Cat}.$
\begin{proof}
Let $C$ be a small category and $G$ a groupoid. The canonical inclusion $C \longrightarrow \pi(C)$ 
induces a natural isomorphism $\mathbf{Grd}(\pi(C),G) \longrightarrow \mathbf{Cat}(C,G)$.
\end{proof}
\end{proposition}

\begin{theorem}[\cite{Min02}]\label{min}
Let $(C,x)$ be a pointed category, then there is an isomorphism
$$\pi_{1}(C,x) \cong \pi_{1}(BC,x)$$
where $BC$ is the classifying space of $C$.
\end{theorem}

By the above theorem, $\pi_{1}(C,*)$ can be studied by using homotopy theoretic properties of $BC$. 
However, we can describe $\pi_{1}(C,*)$ in terms of morphisms in $C$ in certain cases.

\begin{proposition}
If the base point $*$ of $C$ is an initial or a terminal object, then $\pi_{1}(C,*)$ is trivial.
\begin{proof}
Let $*$ be an initial object and consider a sequence
$$(*=c_{0} \stackrel{f_{1}}{\longrightarrow} c_{1} \stackrel{f_{2}}{\longleftarrow} c_{2} \stackrel{f_{2}}{\longrightarrow} \cdots \longleftarrow c_{n}=*),$$
then there exists a unique morphism $\alpha_{2} : *=c_{0} \longrightarrow c_{2}$.
On the other hand, the set $C(c_{0},c_{1})$ consists of the single point  
$f_{2} \circ \alpha_{2}=f_{1}$. Therefore,
\begin{equation}
\begin{split} 
&(*=c_{0} \stackrel{\alpha_{2}}{\longrightarrow} c_{2} \stackrel{=}{\longleftarrow} c_{2} \stackrel{f_{2}}{\longrightarrow} c_{1} \stackrel{f_{2}}{\longleftarrow} c_{2} \longrightarrow  \cdots \longleftarrow c_{n}=*) \notag \\
=&(*=c_{0} \stackrel{f_{3} \circ \alpha_{2}}{\longrightarrow} c_{3} \longleftarrow  \cdots \longleftarrow c_{n}=*).
\end{split}
\end{equation}
By iterating this operation, 
the above sequence can be shown to be equivalent to $* \stackrel{=}{\longrightarrow} *$, thus $\pi_{1}(C,*)=1$.
Similarly, we can prove that $\pi_{1}(C,*)=1$ if $*$ is a terminal object.
\end{proof}
\end{proposition}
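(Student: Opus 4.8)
The plan is to argue directly with the combinatorial model of $\pi_{1}(C,*)$ from Definition \ref{pi}: a class is represented by a zigzag $(*=c_{0} \stackrel{f_{1}}{\rarrow} c_{1} \stackrel{f_{2}}{\larrow} c_{2} \stackrel{f_{3}}{\rarrow} \cdots \larrow c_{n}=*)$, and I will show by induction on the length $n$ that any such zigzag is equivalent to the constant path $*\stackrel{=}{\rarrow}*$, which represents the identity element.

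First take $*$ to be an initial object, so that $\Hom_{C}(*,c)$ is a one-point set for every object $c$; write $\alpha_{c}\colon*\rarrow c$ for its unique element. The cases $n=0$ and $n=1$ are immediate, since the only morphism $*\rarrow*$ is $1_{*}$. For $n\geqq2$ I would rewrite the leftmost ``cap'' $*\stackrel{f_{1}}{\rarrow} c_{1}\stackrel{f_{2}}{\larrow} c_{2}$ as follows. Because $f_{1}$ and $f_{2}\circ\alpha_{c_{2}}$ both lie in the singleton $\Hom_{C}(*,c_{1})$, we have $f_{1}=f_{2}\circ\alpha_{c_{2}}$, so the second relation of Definition \ref{pi} lets me replace the cap by $*\stackrel{\alpha_{c_{2}}}{\rarrow} c_{2}\stackrel{=}{\larrow} c_{2}\stackrel{f_{2}}{\rarrow} c_{1}\stackrel{f_{2}}{\larrow} c_{2}$; the first relation collapses $c_{2}\stackrel{f_{2}}{\rarrow} c_{1}\stackrel{f_{2}}{\larrow} c_{2}$ to $c_{2}$; and the second relation again absorbs the remaining identity into the next arrow $c_{2}\stackrel{f_{3}}{\rarrow} c_{3}$ (or the third relation simply deletes it when $n=2$). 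What is left is a zigzag $(*\stackrel{f_{3}\circ\alpha_{c_{2}}}{\rarrow} c_{3}\larrow\cdots\larrow c_{n}=*)$ of length $n-2$, to which the induction hypothesis applies. Note that the length-$2$ reduction preserves the parity of $n$, so the two base cases $n=0,1$ suffice.

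For the case where $*$ is a terminal object I would run the mirror-image argument, shortening the zigzag from its right-hand endpoint using that $\Hom_{C}(c,*)$ is a singleton for every $c$; alternatively, $*$ is initial in $C^{\mathrm{op}}$ and $\pi$ is compatible with taking opposite categories, so this case reduces to the previous one.

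I do not anticipate a genuine obstacle: the argument is a bookkeeping induction, and the only care needed is in the boundary case $n=2$ (where there is no arrow $c_{2}\rarrow c_{3}$ into which to absorb the leftover identity) and in tracking parity. For comparison, one could instead invoke Theorem \ref{min} together with the classical fact that a small category with an initial or terminal object has contractible classifying space, but the direct argument stays entirely within $\mathbf{Cat}$, as the author intends.
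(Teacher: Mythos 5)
Your proposal is correct and is essentially the paper's own argument: the paper shortens the zigzag two steps at a time using the unique morphism $\alpha_{2}\colon *\rarrow c_{2}$ and the fact that $f_{1}=f_{2}\circ\alpha_{2}$ in the singleton $\Hom(*,c_{1})$, exactly your ``cap'' reduction, and handles the terminal case by the symmetric argument. Your packaging as an induction on length (with the $n=2$ boundary and parity remarks) just makes explicit what the paper phrases as ``by continuing this operation.''
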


\begin{example}
Recall the category $S^{1}$ given in Notation \ref{notation}.
It consists of two objects $0,1$ 
and two parallel morphisms $f,g$ and identity morphisms
\[
\xymatrix
{
0 \ar@<1ex>[r]^{f}  \ar@<-1ex>[r]_{g} & 1.\\
}
\]
Thus $\pi_{1}(S^{1})$ is generated by $(0 \stackrel{f}{\longrightarrow} 1 \stackrel{g}{\longleftarrow} 0)$, 
and  $\pi_{1}(S^{1}) \cong \mathbb{Z}$.
\end{example}

\begin{example}\label{group}{\rm
Let $G$ be a group regarded as a groupoid with single object. An element of 
$\pi_{1}(G)$ can be written as $(g_{1},g_{2},\cdots, g_{n})$ where $g_{i} \in G$ and $1 \leqq i \leqq n$.
The relations in Definition \ref{pi} imply that 
$$(g_{1},g_{2},\cdots, g_{n}) =  g_{1}g^{-1}_{2} \cdots g_{n}^{(-1)^{n-1}} $$ in $\pi_{1}(G)$.
It follows that $\pi_{1}(G) \cong G$.
}
\end{example}


\subsection{Coverings of small categories}

The notion of coverings is already defined 
in the category of spaces, simplicial sets \cite{GZ67} and groupoids \cite{GZ67}, \cite{May99}. 

Now we define coverings in the category of small categories, and consider 
relations between them.

\begin{definition}
Let $M$ be a category and  let $i : A \longrightarrow B$ and $p : X \longrightarrow Y$ be morphisms of $M$. 
We say that $p$ has the right lifting property for $i$ if for every commutative diagram in $M$ of the following form
\[
\xymatrix{
A \ar[d]_{i} \ar[r]^{f} & X \ar[d]^{p} \\
B \ar[r]_{g} & Y }
\]
there is a morphism $h : B \longrightarrow X$ such that $h \circ i = f$ and $p \circ h=g$. 
If such $h$ exists uniquely, then we say that $p$ has the unique right lifting property for $i$.
Let $S$ be a set of morphism in $M$. A morphism which has the right lifting property for any morphism in $S$ is called 
an $S$-injection. Denote the set of $S$-injections by $S$-inj. 
\end{definition}

\begin{definition} \label{cove}
A functor $p : E \longrightarrow B$ is called a covering if 
it has the unique right lifting property for $J_{1}$.
A covering $p : E \longrightarrow B$ is called a universal cover if $E$ is simply connected and $B$ is connected.
\end{definition}

\begin{lemma}\label{[n]}
A functor $p : E \longrightarrow B$ is a covering if and only if 
$p$ has the unique right lifting property for the inclusions
$\ast \longrightarrow [n]$ for all $n > 0$.
\begin{proof}
Since $[n]=0 \longrightarrow 1 \longrightarrow \cdots \longrightarrow n$, 
then we repeat taking lifts of $i \longrightarrow i+1$ starting at the point of the image $* \longrightarrow [n]$.
\end{proof}
\end{lemma}

Next we construct a universal cover over a connected category using the Grothendieck construction \cite{Tho79}.

\begin{definition}
Let $I$ be a small category and $\mathbf{Set}$ the category of sets. 
The Grothendieck construction of a functor 
$F : I \longrightarrow \mathbf{Set}$ is a small category $Gr(F)$ defined as follows.
The set of objects of $Gr(F)$ consists of pairs $(i,x)$ of an object $i \in I_{0}$ and an element $x \in F(i)$.
And a morphism $(i,x) \longrightarrow (j,y)$ 
in $Gr(F)$ is a morphism $f : i \longrightarrow j$ in $I$ such that $F(f)(x)=y$.
It admits the canonical projection $Gr(F) \longrightarrow I$ given by $(i,x) \longrightarrow i$.
\end{definition}

\begin{definition}\label{univer}
Let $(C,*)$ be a pointed and a connected category, then the category $\widehat{C}$ is defined by the 
Grothendieck construction of 
$$\pi(C)(*,-) : C \longrightarrow \mathbf{Set}.$$
The canonical projection $T : \widehat{C} \longrightarrow C$ carries  
an object of $\widehat{C}$ formed
$$(* \longrightarrow c_{1} \longleftarrow c_{2} \longrightarrow \cdots \longleftarrow c_{n})$$
to the last object $c_{n}$.
\end{definition}

\begin{lemma}\label{cover}
The canonical projection $T : \widehat{C} \longrightarrow C$ is a covering.
\begin{proof}
Suppose we have the following commutative diagram
\[
\xymatrix{
\ast \ar[d]_{j_{1}} \ar[r]^{x} & \widehat{C} \ar[d]^{T} \\
[1] \ar[r]_{g} & C. }
\]
The above $x$ gives a class of zigzag sequence of $C$ 
and $g(0 \longrightarrow 1) : g(0)=x_{n} \longrightarrow g(1)$ where $x_{n}$ is the last object of $x$.
Define $h : [1] \longrightarrow \widehat{C}$ by $h(0)=x$, 
$h(1)=(g(0 \longrightarrow 1)) \circ x$ and $h(0 \longrightarrow 1) = g(0 \longrightarrow 1).$
It makes the above diagram commutative and it exists uniquely.
 Similarly, $T$ has the unique lifting property for $j_{1}^{op} : * \longrightarrow [1]^{op}$.
\end{proof}
\end{lemma}

\begin{proposition}\label{0}
The category $\widehat{C}$ is simply connected.
\begin{proof}
For an object 
$$(*)=(c_{*}^{0}) \stackrel{f_{1}}{\longrightarrow} (c_{*}^{1}) \stackrel{f_{2}}{\longleftarrow} (c_{*}^{2}) \longrightarrow \cdots \stackrel{f_{n}}{\longleftarrow} (c_{*}^{n})=(*)$$
in $\pi_{1}(\widehat{C})$,
it suffices to show that 
$$(* \stackrel{f_{1}}{\longrightarrow} T(c_{*}^{1}) \stackrel{f_{2}}{\longleftarrow} T(c_{*}^{2}) \longrightarrow \cdots \stackrel{f_{n}}{\longleftarrow} *) =1$$
in $\pi_{1}(C)$.
By iterating the following process, we obtain
\begin{equation}
\begin{split}
&(* \stackrel{f_{1}}{\longrightarrow} T(c_{*}^{1}) \stackrel{f_{2}}{\longleftarrow} T(c_{*}^{2}) \longrightarrow \cdots \stackrel{f_{n}}{\longleftarrow} *) \\ \notag
=&(* \stackrel{c_{*}^{1}}{\longrightarrow} T(c_{*}^{1}) \stackrel{f_{2}}{\longleftarrow} T(c_{*}^{2}) \longrightarrow \cdots \stackrel{f_{n}}{\longleftarrow} *) \\
=&(* \stackrel{c_{*}^{2}}{\longrightarrow} T(c_{*}^{2}) \longrightarrow \cdots \stackrel{f_{n}}{\longleftarrow} *) = \cdots = 1.\\
\end{split}
\end{equation}
\end{proof}
\end{proposition}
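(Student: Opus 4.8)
The plan is to reduce the statement to a triviality downstairs in $C$, using the covering $T:\widehat{C}\rarrow C$ of Lemma~\ref{cover}. Write $\widehat{*}$ for the object $(*,\mathrm{id}_{*})$ of $\widehat{C}=Gr\!\left(\Hom_{\pi(C)}(*,-)\right)$, i.e. the class of the trivial zigzag at $*$; one checks easily that $\widehat{C}$ is path connected — a zigzag in $C$ representing $\alpha\in\Hom_{\pi(C)}(*,c)$ lifts along $T$ to a zigzag from $\widehat{*}$ to $(c,\alpha)$ — so it is harmless to abbreviate $\pi_{1}(\widehat{C},\widehat{*})$ as $\pi_{1}(\widehat{C})$, and the argument below in fact applies at any basepoint. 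Since $T$ is a covering, the preceding proposition tells us $T_{*}:\pi_{1}(\widehat{C},\widehat{*})\rarrow\pi_{1}(C,*)$ is injective, so it suffices to show that $T_{*}$ sends every loop at $\widehat{*}$ to the identity of $\pi_{1}(C,*)$.

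First I would fix such a loop $\ell$, represented by a zigzag
$$\widehat{*}=(c^{0},\alpha_{0})\rarrow(c^{1},\alpha_{1})\larrow(c^{2},\alpha_{2})\rarrow\cdots\larrow(c^{n},\alpha_{n})=\widehat{*}$$
in which each arrow is a morphism $f_{i}$ of $C$ between $c^{i-1}$ and $c^{i}$, pointing one way or the other; then $T_{*}[\ell]$ is the class of $*\stackrel{f_{1}}{\rarrow}c^{1}\stackrel{f_{2}}{\larrow}c^{2}\rarrow\cdots\larrow *$ in $\pi_{1}(C,*)=\Hom_{\pi(C)}(*,*)$. The crux is the following claim, established by induction on $i$: the second coordinate $\alpha_{i}\in\Hom_{\pi(C)}(*,c^{i})$ is precisely the class, in $\pi(C)$, of the initial length-$i$ segment $*\stackrel{f_{1}}{\rarrow}c^{1}\larrow\cdots c^{i}$ of this zigzag. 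Indeed $\alpha_{0}=\mathrm{id}_{*}$ is the class of the empty zigzag, and for the inductive step one reads off the definition of morphisms in the Grothendieck construction: if $f_{i}:c^{i-1}\rarrow c^{i}$ (forward) then $(f_{i})\circ\alpha_{i-1}=\alpha_{i}$, while if $f_{i}:c^{i}\rarrow c^{i-1}$ (backward) then $(f_{i})\circ\alpha_{i}=\alpha_{i-1}$, i.e. $\alpha_{i}=(f_{i})^{-1}\circ\alpha_{i-1}$ in the groupoid $\pi(C)$. In both cases this is exactly the effect on the class of appending one more arrow to the zigzag representing $\alpha_{i-1}$, which completes the induction.

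Applying the claim with $i=n$ finishes the proof: the left-hand side is $\alpha_{n}=\mathrm{id}_{*}$ (because $(c^{n},\alpha_{n})=\widehat{*}$), while the right-hand side is the class of the whole zigzag $T(\ell)$, namely $T_{*}[\ell]$. Hence $T_{*}[\ell]=1$ in $\pi_{1}(C,*)$, and injectivity of $T_{*}$ gives $[\ell]=1$; since $\ell$ was arbitrary, $\pi_{1}(\widehat{C})=1$.

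I expect the only real work to be in the inductive claim, and within it the bookkeeping of composition conventions in $\pi(C)$: one must verify that appending a backward arrow $\stackrel{f_{i}}{\larrow}$ to a zigzag ending at $c^{i-1}$ corresponds to left composition with $(f_{i})^{-1}$, so that the recursion for $\alpha_{i}$ dictated by the Grothendieck construction matches the recursion for the partial zigzags. This is routine but is the place where an orientation slip would hide; everything else is formal.
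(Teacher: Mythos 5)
Your proof is correct and is essentially the paper's argument: the paper also reduces, via $T$ being a covering and hence $T_{*}$ injective on $\pi_{1}$, to showing the image zigzag is trivial in $\pi_{1}(C,*)$, and its step-by-step deformation $(*\stackrel{f_{1}}{\rarrow}\cdots)=(*\stackrel{c_{*}^{1}}{\rarrow}\cdots)=(*\stackrel{c_{*}^{2}}{\rarrow}\cdots)=\cdots=*$ is exactly your inductive claim that the partial composite of the first $i$ arrows equals the second coordinate $\alpha_{i}$ in $\pi(C)$. You merely make explicit two points the paper leaves implicit (injectivity of $T_{*}$ and path-connectedness of $\widehat{C}$), which is fine.
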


\begin{corollary}
The canonical projection $T : \widehat{C} \longrightarrow C$ is a universal cover.
\end{corollary}

We recall the definition of coverings in the category of simplicial sets and groupoids \cite{GZ67}, \cite{May99}.

\begin{definition}
A morphism  $p : E \longrightarrow B$ in $\mathbf{SSet}$ is called a covering 
if it has the unique right lifting property for the inclusions $\Delta[0] \longrightarrow \Delta[n]$, $n \geqq 0$.
\end{definition}

\begin{definition}
A morphism $p : E \longrightarrow B$ in $\mathbf{Grd}$ is called a covering 
if it has the unique right lifting property for $K$ in Notation \ref{notation}.
\end{definition}

\begin{proposition}[\cite{GZ67}]
Both $S : \mathbf{Space} \longrightarrow \mathbf{SSet}$ and $|-| : \mathbf{SSet} \longrightarrow \mathbf{Space}$ 
preserve coverings.
\end{proposition}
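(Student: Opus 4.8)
The plan is to verify the two lifting conditions defining a covering of simplicial sets directly for $S(p)$ and $|f|$, using the known correspondence between coverings of spaces and the unique-path-lifting property, together with the standard behaviour of the singular/realization adjunction. Recall that a covering of simplicial sets is a map with the unique right lifting property against every $\Delta[0]\rarrow\Delta[n]$ for $n\geq 0$; by an elementary argument (as in \cite{GZ67}) this is equivalent to having the unique right lifting property against $\Delta[0]\rarrow\Delta[1]$ alone, since any $\Delta[n]$ is built from edges glued along vertices, exactly mirroring Lemma \ref{cover} in the categorical setting. So it suffices to treat the edge-lifting condition.

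For $S$: let $p\colon E\rarrow B$ be a covering in $\mathbf{Spaces}$ and consider a commutative square with sides $\Delta[0]\rarrow\Delta[1]$ on the left and $S(p)$ on the right. By adjunction this is the same as a square in $\mathbf{Spaces}$ with left side $|\Delta[0]|=\ast \rarrow |\Delta[1]|=[0,1]$ and right side $p$, i.e.\ a point $e\in E$ over the start of a path in $B$. Unique path lifting for the covering $p$ (the topological content) gives a unique lift $[0,1]\rarrow E$, which transposes back to the unique filler $\Delta[1]\rarrow S(E)$ of the original square. Uniqueness is preserved because the adjunction bijection is natural, so distinct fillers would give distinct path lifts. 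Hence $S(p)$ is a covering of simplicial sets.

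For $|-|$: let $f\colon E\rarrow B$ be a covering of simplicial sets; we want $|f|\colon |E|\rarrow |B|$ to be a covering of spaces. The cleanest route is to use that coverings of simplicial sets are precisely the pullbacks (along $B\rarrow \mathrm{cosk}$ or, more simply, the maps classified by a morphism of fundamental groupoids) — but I would instead argue locally: a covering of simplicial sets is in particular a Kan fibration with discrete fibres, and realization preserves Kan fibrations (Quillen, \cite{Qui68}); a map of spaces which is a Serre fibration with discrete fibres and satisfies unique path lifting is a covering in the relevant sense of \cite{GZ67}, \cite{DP95}. Unique path lifting for $|f|$ follows by subdividing a path in $|B|$ finely enough that it lies in realizations of simplices, lifting simplex-by-simplex using the unique simplex-lifting property of $f$, and invoking uniqueness of the simplicial lifts to see the pieces agree. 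The main obstacle is precisely this last point: realization does not commute with the subdivision one performs on a path in $|B|$, so one must be careful that the path lift assembled from simplicial lifts is well-defined and continuous; this is handled by the compactness of $[0,1]$ and the fact that $|f|$ is an open map (realization of a covering), so the lifted pieces glue along open overlaps. Once unique path lifting and local triviality (from discreteness of fibres plus the fibration property) are in hand, $|f|$ is a covering, completing the proof.
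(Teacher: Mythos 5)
The paper itself gives no proof of this statement---it is simply quoted from \cite{GZ67}---so your argument has to stand on its own, and it has genuine gaps in both directions. First, the reduction you open with is false: unique right lifting against $\Delta[0] \rarrow \Delta[1]$ alone does not imply unique right lifting against all $\Delta[0] \rarrow \Delta[n]$. The inclusion $\partial\Delta[2] \rarrow \Delta[2]$ is a counterexample: it is injective and every $1$-simplex of $\Delta[2]$ (degenerate or not) lies in $\partial\Delta[2]$, so edges lift uniquely with prescribed initial vertex, yet the nondegenerate $2$-simplex admits no lift at all. The analogy with Lemma \ref{cover} breaks down because a functor out of $[n]$ is determined by its chain of composable edges, whereas an $n$-simplex of a simplicial set is neither determined by nor reconstructible from its edges. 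As written, your argument for $S$ therefore only establishes unique lifting of $1$-simplices. The repair is easy and makes the reduction unnecessary: transpose the square for arbitrary $n$ to a lifting problem $\ast \rarrow |\Delta[n]| \rarrow B$ against the topological covering $p$ and apply the lifting criterion for covering spaces ($|\Delta[n]|$ being simply connected and locally path-connected), which gives existence and uniqueness of the lift for every $n$ at once.

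Second, the direction for $|-|$ is where the actual content of the Gabriel--Zisman result lies, and your sketch does not carry it. That a covering in the unique-lifting sense is a Kan fibration with discrete fibers is asserted rather than proved (it does follow by lifting a simplex at the vertex common to all faces of a horn and using uniqueness, but this needs to be said); that the fibers of $|f|$ over non-vertex points are discrete already requires knowing that realization is compatible with the relevant pullbacks. More seriously, ``Serre fibration with discrete fibers and unique path lifting'' does not formally yield a covering: local triviality must be proved, not inferred from ``discreteness of fibres plus the fibration property''. And your route to unique path lifting fails exactly at the point you flag: a path in $|B|$ need not admit a finite subdivision whose pieces each lie in the realization of a single simplex (a path oscillating between two closed $1$-cells and accumulating at their common vertex has no such subdivision), while the openness of $|f|$ you invoke to glue the pieces is not available at that stage---realizations of Kan fibrations are Serre fibrations by \cite{Qui68}, but Serre fibrations need not be open, and ``open because it is the realization of a covering'' is circular. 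The standard argument inverts your order of quantifiers: one first shows that a simplicial covering is locally trivial with discrete fiber (its pullback over every simplex $\Delta[n] \rarrow B$ is the projection $\Delta[n] \times F \rarrow \Delta[n]$), then uses that realization preserves these pullbacks and trivial coverings to produce evenly covered neighborhoods (for instance over open stars) in $|B|$; unique path lifting is then a consequence, not an ingredient.
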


\begin{proposition}\label{covg}
Both $i : \mathbf{Grd} \longrightarrow \mathbf{Cat}$ and $\pi : \mathbf{Cat} \longrightarrow \mathbf{Grd}$ 
preserve coverings.
\begin{proof}
Since $i$ is right adjoint to $\pi$, it preserves the unique right lifting property, thus it preserves coverings.
Conversely, let $p : E \longrightarrow B$ be a covering in $\mathbf{Cat}$. Consider the following commutative diagram in $\mathbf{Grd}$
\[
\xymatrix{
\ast \ar[d] \ar[r]^{e} & \pi(E) \ar[d]^{\pi(p)} \\
S^{\infty} \ar[r]_{f} & \pi(B). }
\]
Let $s$ be the image of morphism $0 \longrightarrow 1$ in $S^{\infty}$ by $f$.
It is a zigzag sequence of morphisms of $B$ starting at $p \circ e(*)$.
Since $p$ is a covering, we can find lifts of morphisms appearing in $s$, uniquely.
It gives a functor $S^{\infty} \longrightarrow \pi(E)$ making the diagram commutative, therefore $\pi(p)$ is a covering. 
\end{proof}
\end{proposition}

The category of simplicial sets and the category of small categories 
are related by the nerve functor and the categorization functor in \cite{GZ67}.

\begin{definition}\label{Nc}
The nerve functor $N : \mathbf{Cat} \longrightarrow \mathbf{SSet}$ is defined by
$$N_{n}C=\mathbf{Cat}([n],C)$$ and 
$$d_{i}(f_{1},\cdots,f_{n}) =(f_{1},\cdots ,f_{i-1},f_{i+1} \circ f_{i},f_{i+2},\cdots,f_{n})$$
and 
$$s_{j}(f_{1},\cdots,f_{n})=(f_{1},\cdots,f_{j},1,f_{j+1},\cdots,f_{n}).$$
The categorization functor $c : \mathbf{SSet} \longrightarrow \mathbf{Cat}$ is defined as follows.
The set of objects $cX_{0}$ is $X_{0}$ and 
morphisms in $cX$ are freely generated by the set $X_{1}$ subject to relations
given by elements of $X_{2}$, namely, $x_{1} = x_{2}x_{0}$ in $cX$ if there exists a $2$-simplex
$x$ such that $d_{2}x = x_{2}$, $d_{0}x = x_{0}$ and $d_{1}x = x_{1}$.
\end{definition}

\begin{proposition}\label{cN} {\rm \cite{GZ67}} The pair of functors 
$$c : \mathbf{SSet} \Longleftrightarrow \mathbf{Cat} : N$$ 
is an adjoint pair, and $cN \cong 1_{\mathbf{Cat}}$.
\end{proposition}

\begin{proposition}
A functor $p$ is a covering in $\mathbf{Cat}$ if and only if 
$N(p)$ is a covering in $\mathbf{SSet}$.
\begin{proof}
Since $N$ is right adjoint to $c$, $N$ preserves the unique right lifting property. 
Therefore, Lemma \ref{[n]} implies that $N$ preserves coverings.
Conversely, let $N(p)$ be a covering, then $N(p)$ has the unique right lifting property for 
$$d_{0}^{*},d_{1}^{*} : \Delta[0] \longrightarrow \Delta[1].$$
Since $cN \cong 1_{\mathbf{Cat}}$,  $p$ has the unique right lifting property for $J_{1}$.
\end{proof}
\end{proposition}

Before we end  of this section, let us define a Galois-type correspondence between 
subgroups of $\pi_{1}(C)$ and covering spaces over $C$ for a connected category $C$.
In the case of groupoids, May proved the following \cite{May99}. 

\begin{theorem}[\cite{May99}]
For a connected groupoid $G$, let $\mathbf{Cov}_{\mathbf{Grd}}(G)$ be the category of connected coverings over $G$ in $\mathbf{Grd}$ 
and let $\mathbf{O}(\pi_{1}(G))$ be the category consisting of subgroups  of $\pi_{1}(G)$ as objects
 and subconjugacy relations as morphisms.
Then there exists an equivalence of categories between $\mathbf{Cov}_{\mathbf{Grd}}(G)$ and $\mathbf{O}(\pi_{1}(G))$.
\end{theorem}

\begin{proposition}
For a connected category $C$, let $\mathbf{Cov}_{\mathbf{Cat}}(C)$ be the category of connected coverings over $C$ in $\mathbf{Cat}$.
Then there is an equivalence of categories between $\mathbf{Cov}_{\mathbf{Cat}}(C)$ and $\mathbf{Cov}_{\mathbf{Grd}}(\pi C)$.
\begin{proof}
The groupoidification functor induces $\pi : \mathbf{Cov}_{\mathbf{Cat}}(C) \longrightarrow \mathbf{Cov}_{\mathbf{Grd}}(\pi C)$ 
by Proposition \ref{covg}. On the other hand, let $q$ be a covering in $\mathbf{Grd}$ over $\pi C$, 
the pullback of $q$ along the canonical functor $C \longrightarrow \pi(C)$ induces a covering in $\mathbf{Cat}$ over $C$.
This correspondence gives an inverse functor $\mathbf{Cov}_{\mathbf{Grd}}(\pi C) \longrightarrow \mathbf{Cov}_{\mathbf{Cat}}(C)$ of $\pi$.
\end{proof}
\end{proposition}

\begin{corollary}
For a connected category $C$, there is an equivalence of categories between $\mathbf{Cov}_{\mathbf{Cat}}(C)$ and $\mathbf{O}(\pi_{1}(C))$.
\end{corollary}


 \section{The $1$-type model structure on $\mathbf{Cat}$}
 
 Model categories, first introduced by Quillen in \cite{Qui67}, form the foundation of homotopy theory.
This is a framework to do homotopy theory in general categories.
In this section, we define a model structure on the category of small categories, 
which is closely related to the notion of coverings, fundamental groups and the groupoidification.

\subsection{The $1$-type model structure on $\mathbf{Cat}$}

\begin{definition}
Suppose $M$ is a category. A functorial factorization is an ordered pair 
$(\alpha,\beta)$ of functors $\mathrm{Mor}(M) \longrightarrow \mathrm{Mor}(M)$ such that 
$f=\beta(f) \circ \alpha(f)$ for all morphisms $f$ in $M$, 
where $\mathrm{Mor}(M)$ is the category of morphisms of $M$.
\end{definition}

\begin{definition}\label{modef}
A model structure on a category $M$ consists of three distinguish classes of morphisms closed under retracts and compositions, 
the weak equivalences $W$, 
the cofibrations $C$, 
and the fibrations $F$, and two functorial factorizations $(\alpha,\beta)$ and $(\gamma, \delta)$ satisfying the following properties.
\begin{enumerate}
\item If $f$ and $g$ are morphisms of $M$ such that $g \circ f$ is defined and two of $f,g$ and $g \circ f$ are 
weak equivalences, then so is the third.
\item Every morphism in $W \cap C$ has the right lifting property for $F$, 
and every morphism in $C$ has the right lifting property for $W \cap F$.
\item For any morphism $f$ in $M$, $\alpha(f) \in C$  , $\beta(f) \in W \cap F$, 
$\gamma(f) \in W \cap C$ and $\delta(f) \in F$.
\end{enumerate}
A morphism in $W \cap C$ is called a trivial cofibration, and a morphism in $W \cap F$ is called a trivial fibration, respectively.

A model category is a category $M$ closed under small limits and colimits together with a model structure on $M$.
\end{definition}

It tends to be quite difficult to prove that a category admits a model structure.
The axioms of model structure are always hard to check.
However, there exists a technique to construct a new model structure from 
another good model structure.

\begin{definition}
We say that a model category $M$ is cofibrantly generated if there exist sets $A$ and $B$ of morphisms such that
\begin{enumerate}
\item both $A$ and $B$ permit the small object argument \cite{Hir03},

\item $W \cap F= A\textrm{-inj}$ and $F= B\textrm{-inj}$.
\end{enumerate}
The above set $A$ is called a generating cofibrations, and $B$ is called a generating trivial cofibrations.
Moreover, we say that $M$ is combinatorial if it is cofibrantly generated and locally presentable \cite{KL01}.
\end{definition}

\begin{example}\label{model} Let us recall several known model structures on $\mathbf{Cat}$, $\mathbf{Grd}$ and $\mathbf{SSet}$.
\begin{itemize}
\item The Joyal-Tierney model structure on $\mathbf{Cat}$ is defined as follows \cite{JT91}, \cite{Rez00}.
\begin{enumerate}
\item A morphism is a weak equivalence if it is an equivalence of categories.
\item A morphism is a cofibration if it is injective on the set of objects.
\end{enumerate}
Let $\mathbf{Cat}_{JT}$ be the category of small categories equipped with the above model structure.
\item Also $\mathbf{Grd}$ has the Anderson model structure with the same weak equivalences and cofibrations as the Joyal-Tierney model structure \cite{And78}.
Let $\mathbf{Grd}_{A}$ be the category of groupoids equipped with the above model structure.
\item Thomason found another model structure on $\mathbf{Cat}$ in \cite{Tho80} such that a functor is  
a weak equivalence if and only if the induced map between classifying spaces is a weak homotopy equivalence in $\mathbf{Space}$.
\item The Thomason model structure is closely related to 
the Kan model structure on $\mathbf{SSet} $\cite{Qui68}, \cite{Hov99} as follows.
\begin{enumerate}
\item A morphism is a weak equivalence if its geometric realization is a weak homotopy equivalence in $\mathbf{Space}$.
\item A morphism is a fibration if it is a Kan fibration.
\end{enumerate}
Let $\mathbf{SSet}_{K}$ be the category of simplicial sets equipped with the above model structure.
\end{itemize}
\end{example}

\begin{theorem}[\cite{Lur09}]
If $M$ is a combinatorial simplicial left proper model category and 
$S$ is a set of morphisms. 
Then the left Bousfield localization of $M$ with respect to $S$ does exist as a left proper simplicial combinatorial model category.
\end{theorem}

\begin{example}\label{jt}
The model category $\mathbf{Cat}_{JT}$ admits the generating cofibrations $I$ and the trivial cofibrations $K$ in Notation \ref{notation}. 
Since $\mathbf{Sets}$ is locally presentable, $\mathbf{Cat}$ is so \cite{KL01}.
For a small category $C$, let $\mu(C)$ be the maximal groupoid contained in $C$.
The function complex $\mathrm{Hom}(C,D)=N \mu(D^{C})$ gives rise to 
a simplicial enrichment for $\mathbf{Cat}$ where $D^{C}$ is the functor category from $C$ to $D$ \cite{Rez00}.
Since all objects in $\mathbf{Cat}_{JT}$ are fibrant and cofibrant, $\mathbf{Cat}_{JT}$ is left proper and right proper.
Thus the category $\mathbf{Cat}_{JT}$ is a combinatorial simplicial left proper model category.
\end{example}

\begin{definition}
Denote the Bousfield localization of 
$\mathbf{Cat}_{JT}$ with respect to the inclusion
$\varphi : [1] \longrightarrow S^{\infty}$ by $\mathbf{Cat}_{1}$.
\end{definition}

The model category $\mathbf{Cat}_{1}$ is called the $1$-type model category. 
It has the $\varphi$-local equivalences as weak equivalences and 
the cofibrations in $\mathbf{Cat}_{JT}$ as cofibrations.
We will show that a functor is a $\varphi$-local equivalence if and only if it is a weak $1$-equivalence.

\begin{definition}
A functor $f : C \longrightarrow D$ is called a weak $1$-equivalence 
if the both induced maps $\pi_{0}(C) \longrightarrow \pi_{0}(D)$ and
$\pi_{1}(C,x) \longrightarrow \pi_{1}(D,f(x))$ are isomorphisms for all $x \in C_{0}$. 
\end{definition}

\begin{lemma}\label{grou}
Let $G$ be a groupoid, then the canonical inclusion  $G \longrightarrow \pi(G)$ is an isomorphism of categories.
\begin{proof}
The inverse functor $\pi(G) \longrightarrow G$ is given by the identity map on the set of objects, and 
$$(\cdot \stackrel{f_{1}}{\longrightarrow} \cdot \stackrel{f_{2}}{\longleftarrow} \cdot \stackrel{f_{3}}{\longrightarrow} \cdots \stackrel{f_{n}}{\longleftarrow} \cdot)
\mapsto f_{n}^{-1} \circ \cdots \circ f_{3} \circ f_{2}^{-1} \circ f_{1}$$
on the set of morphisms.
\end{proof}
\end{lemma}

\begin{proposition}\label{1-equ}
The canonical inclusion $C \longrightarrow \pi(C)$ is a weak $1$-equivalence for any small category $C$.
\begin{proof}
The induced map on the set of objects is the identity map since $C_{0}=\pi(C)_{0}$.  
By Lemma \ref{grou}, the functor induces an isomorphism $\pi(C) \longrightarrow \pi(\pi(C))$ of categories.
Thus $\pi_{1}(C,*) \longrightarrow \pi_{1}(\pi(C),*)$ is an isomorphism.
\end{proof}
\end{proposition}

\begin{lemma}\label{equi}
A functor $f : C \longrightarrow D$ is  a weak $1$-equivalence if and only if the functor  $\pi(f) : \pi(C) \longrightarrow \pi(D)$ is an equivalence of categories.
\begin{proof}
If $\pi(f)$ is an equivalence, then it is obvious that $f$ is a weak $1$-equivalence by Proposition \ref{1-equ}
and the following commutative diagram
\[
\xymatrix{
C \ar[d] \ar[r]^{f} & D \ar[d] \\
\pi(C) \ar[r]_{\pi(f)} & \pi(D). }
\]
Let $f : C \longrightarrow D$ be a weak $1$-equivalence. 
Since $\pi_{n}(BG,*)=0$ for any pointed groupoid $(G,*)$ and $n \geqq 2$, 
the induced map $\pi(f)_{*} : \pi_{n}(B\pi(C),*) \longrightarrow \pi_{n}(B\pi(D),*)$ 
is an isomorphism for all $n \geqq 0$. 
This is a weak equivalence on $\mathbf{Cat}$ with the Thomason model structure in Example \ref{model}.
A functor between groupoids is a weak equivalence in the Thomason model structure
if and only if  it is an equivalence of categories \cite{CGT04}.
Thus $\pi(f)$ is an equivalence of categories.
\end{proof}
\end{lemma}

\begin{definition}
Let $M$ be a cofibrantly generated simplicial left proper model category 
and let $j : A \longrightarrow B$ be a morphism in $M$. We say that
\begin{enumerate}
\item a fibrant object $W$ is $j$-local if the induced morphism between the homotopy function complexes
$$j^{*} : \mathrm{Map}(B,W) \longrightarrow \mathrm{Map}(A,W)$$
is a weak equivalence in $\mathbf{SSet}_{K}$,
\item a morphism $f : X \longrightarrow Y$ is a $j$-local equivalence if $f^{*} : \mathrm{Map}(Y,W) \longrightarrow \mathrm{Map}(X,W)$ 
is a weak equivalence in $\mathbf{SSet}_{K}$ for all $j$-local objects $W$.
\end{enumerate}
\end{definition}

\begin{lemma}\label{local}
A small category is 
a $\varphi$-local object if and only if it is a groupoid.
\begin{proof}
Since all objects in $\mathbf{Cat}_{JT}$ are cofibrant and fibrant, the homotopy function complex 
$\mathrm{Map}(X,Y)$ in $\mathbf{Cat}_{JT}$ is weakly equivalent to the function complex $\mathrm{Hom}(X,Y)$ in Example \ref{jt}.
If $G$ is a groupoid, 
$$\mathrm{Hom}([1],G) \cong N G^{[1]} \cong N G^{S^{\infty}}\cong \mathrm{Hom}(S^{\infty},G).$$
Therefore $\varphi^{*} : \mathrm{Hom}(S^{\infty},G) \longrightarrow \mathrm{Hom}([1],G)$ is a weak equivalence in $\mathbf{SSet}_{K}$.
Conversely, assume $G$ is $\varphi$-local. Since $\varphi$ is a cofibration in $\mathbf{Cat}_{JT}$, $\varphi^{*}$ is a trivial fibration. Thus
$$\varphi^{*} : \mathrm{Hom}(S^{\infty}, G)_{0} \longrightarrow \mathrm{Hom}([1],G)_{0}$$
 is surjective.
Therefore, the map $\varphi^{*} : (G^{S^{\infty}})_{0} \longrightarrow (G^{[1]})_{0}$ is surjective. 
Hence $G$ is a groupoid.
\end{proof}
\end{lemma}

\begin{corollary}\label{fibrant}
A small category is fibrant in $\mathbf{Cat}_{1}$ if and only if it is a groupoid.
\begin{proof}
Since an object is fibrant in the left Bousfield localization with respect to the map $\varphi : [1] \longrightarrow S^{\infty}$
if and only if it is $\varphi$-local \cite{Hir03}.
\end{proof}
\end{corollary}

\begin{proposition}\label{weak}
A functor $f : X \longrightarrow Y$ is a $\varphi$-local equivalence if and only if it is a weak $1$-equivalence.
\begin{proof}
The functor $f$ induces the map between function complexes
$$f^{*} : N(W^{Y}) \longrightarrow N(W^{X})$$
for a $\varphi$-local object $W$. The both categories $W^{X}\cong W^{\pi X}$ and $W^{Y} \cong W^{\pi Y}$ are groupoids since $W$ is so.
Suppose $f^{*}$ is a weak equivalence, then  
$(\pi f)^{*} : W^{\pi Y} \longrightarrow W^{\pi X}$ is an equivalence of categories.
Take $W=\pi X$, we obtain an inverse of functor $\pi f$.
Therefore $\pi f$ is also an equivalence of categories, hence $f$ is a weak $1$-equivalence.
Conversely, we can prove that $f^{*}$ is a weak equivalence if $f$ is a weak $1$-equivalence. 
\end{proof}
\end{proposition}

The notion of weak $1$-equivalence also exists in $\mathbf{Space}$ and $\mathbf{SSet}$.

\begin{definition} \label{1-equ'}
A morphism $f : X \longrightarrow Y$ in $\mathbf{Space}$ is called a weak $1$-equivalence 
if the both induced maps $\pi_{0}(X) \longrightarrow \pi_{0}(Y)$ and
$\pi_{1}(X,x) \longrightarrow \pi_{1}(Y,f(x))$ are isomorphisms for all $x \in X$. 
On the other hand, a morphism $f$ in $\mathbf{SSet}$ is called 
a weak $1$-equivalence if its geometric realization $|f|$ is a weak $1$-eqivalence in $\mathbf{Space}$. 
By Theorem \ref{min}, $f$ is a weak $1$-equivalence in $\mathbf{Cat}$ 
if and only if its nerve $Nf$ is a weak $1$-equivalence in $\mathbf{SSet}$.
\end{definition}

\begin{theorem}[\cite{DP95}]\label{sset1}  There exists a model structure on $\mathbf{SSet}$ 
with the following weak equivalences and fibrations.
\begin{enumerate}
\item A morphism is a weak equivalence if and only if it is a weak $1$-equivalence.
\item A morphism is a fibration if and only if it has the right lifting property for $J'$,
\end{enumerate}
where 
$$J'=\{\Lambda_{j}^{n} \longrightarrow \Delta[n] , \Lambda^{3}_{k} \longrightarrow \partial \Delta[3] \ |\ 0 < n \leqq 2 , 0 \leqq j \leqq n , 0 \leqq k \leqq 3 \}.$$
Furthermore, this is a cofibrantly generated model structure with generating cofibrations $I'$ and trivial cofibrations $J'$,
 where
$$I'= \{ \partial \Delta[n] \longrightarrow \Delta[n]\ |\ 0 \leqq n \leqq 2\}.$$
Let $\mathbf{SSet}_{1}$ be the category of simplicial sets equipped with the above model structure.
\end{theorem}


\subsection{Fibrations and coverings}

In this section, we characterize the fibrations in $\mathbf{Cat}_{1}$.
It is closely related to coverings in $\mathbf{Cat}$ and Kan fibrations.

\begin{definition}{\rm
A functor $F : C \longrightarrow D$ is called fibered in groupoids if the following two conditions are satisfied: 
\begin{enumerate}
\item For every object $x$ in $C$ and every morphism $f : y \longrightarrow F(x)$ in $D$, there
exists a morphism $g : x' \longrightarrow  x$ in $C$ such that $F(g) = f$. 
\item For every morphism $f : x' \longrightarrow  x''$ in $C$ and every object $x$ in $C$, the map
$$C(x,x') \longrightarrow C(x,x'') \times_{D(F(x),F(x''))} D(F(x),F(x'))$$
given by $g \mapsto (f \circ g, F(g))$ is bijective. 
Similarly, we can define the notion of cofibered in groupoids \cite{Lur09}. 
\end{enumerate}
}
\end{definition}

\begin{proposition}\label{fibered}
A functor $F : C \longrightarrow D$ is fibered and cofibered in groupoids if and only if 
it has the right lifting property for $J$.
\begin{proof}
The first condition of fibered and cofibered in groupoids corresponds to the lifting property for $J_{1}$.
The map of the second condition is surjective if and only if 
the functor $F$ has the lifting property for $J_{2}$. 
Finally, the map is injective if and only if 
the functor $F$ has the lifting property for $J_{3}$. 
\end{proof}
\end{proposition}

\begin{theorem}\label{them}
The $1$-type model category $\mathbf{Cat}_{1}$ consists of the following structure. 
If $f : X \longrightarrow Y$ is a functor, then 
\begin{enumerate}
\item $f$ is a weak equivalence if and only if it is a weak $1$-equivalence,
\item $f$ is a cofibration if and only if $f_{0} : X_{0} \longrightarrow Y_{0}$ is injective,
\item $f$ is a fibration if and only if it is fibered and cofibered in groupoids.
\end{enumerate}
\begin{proof}
On the weak equivalences and cofibrations,  they are shown by Proposition \ref{weak} and the 
definition of the left Bousfield localization. Let us consider the fibration in $\mathbf{Cat}_{1}$.
We can put a cofibrantly generated model structure on $\mathbf{Cat}$ from $\mathbf{SSet}_{1}$ 
using the pair of adjoint functors (see in \cite{Hir03})
$$c : \mathbf{SSet}_{1} \Longleftrightarrow \mathbf{Cat} : N.$$
In the induced model structure on $\mathbf{Cat}$, 
a functor is a weak equivalence if and only if it is a weak $1$-equivalence, 
and the set of generating cofibration is $c(I')$ and generating trivial cofibration is $c(J')$.
Comparing $c(I')$ with $I$ implies that $c(I')\textrm{-inj} = I\textrm{-inj}$.
It follows that the classes of cofibrations are equal to each other.
Thus the induced model structure on $\mathbf{Cat}$ by the pair of adjoint functors $(c,N)$ 
coincides with the $1$-type model structure.
Also we can see that $c(J')\textrm{-inj} = J\textrm{-inj}$.
By Proposition \ref{fibered}, a fibration in $\mathbf{Cat}_{1}$ coincides with a functor which is fibered and cofibered in groupoids.
\end{proof}
\end{theorem}

\begin{theorem}[\cite{Lur09}]\label{kan} 
A functor $p$ is a fibration in $\mathbf{Cat}_{1}$ if and only if $Np$ is a Kan fibration.
\end{theorem}

\begin{corollary}\label{gr}
A category $G$ is a groupoid if and only if $N(G)$ is a Kan complex.
\begin{proof}
By Proposition \ref{fibrant}, groupoids coincide with the fibrant objects 
in $\mathbf{Cat}_{1}$, and 
Kan complexes coincides with the fibrant objects in $\mathbf{SSet}_{K}$. 
Thus Theorem \ref{kan} implies that $G$ is a groupoid if and only if $N(G)$ is a Kan complex since $N$ preserves terminal objects.
\end{proof}
\end{corollary}

\begin{lemma}\label{covfib}
If $p : E \longrightarrow B$ is a covering, then $p$ is a fibration in $\mathbf{Cat}_{1}$.
\begin{proof}
By the definition of coverings, $p$ has the lifting property for $J_{1}$.
Suppose we have the following commutative diagram
\[
\xymatrix{
I_{2} \ar[d]_{j_{2}} \ar[r]^{f} & E \ar[d]^{p} \\
[2] \ar[r]_{g} & B. }
\]
We obtain a morphism 
$\alpha : f(0) \longrightarrow f(2)$ over $g(0 \longrightarrow 1) : g(0) \longrightarrow g(1)$ by the lifting property of $p$. And 
$p \left( f(2 \longrightarrow 1) \circ \alpha  \right) = g(0 \longrightarrow 2)$ implies that 
$f(2 \longrightarrow 1) \circ \alpha =f(0 \longrightarrow 1)$ by the unique lifting property, 
then $p$ has the lifting property for $J_{2}$.
The unique lifting property implies that $p$ has the lifting property for $J_{3}$, similarly.
\end{proof}
\end{lemma}

\begin{definition}
Let $f : X \longrightarrow Y$ be a functor, then the category $f^{-1}(y)$ \cite{Qui73} is defined as a subcategory of $X$ 
for $y \in Y_{0}$,
${f^{-1}(y)}_{0}=f^{-1}(y)$ and $f^{-1}(y)(a,b)=p^{-1}(1_{y})$. 
A category is called discrete if the set of morphisms consists of only identity morphisms.
\end{definition}

\begin{proposition}\label{discrete}
A functor $p : E \longrightarrow B$ is a covering if and only if $p$ is a fibration in $\mathbf{Cat}_{1}$ and the category of fiber 
$p^{-1}(b)$ is discrete for any $b \in B_{0}$.
\begin{proof}
Let $p : E \longrightarrow B$ be a covering, then $p$ is a fibration by Lemma \ref{covfib}, 
and every fiber has the only identity morphisms by the unique lifting property.
Conversely, let $p$ be a fibration with discrete fibers. 
Since $p$ is a fibration, $p$ has the lifting property for $J_{1}$. 
We will show that the uniqueness of the lifting.
For the following commutative diagram
\[
\xymatrix{
\ast \ar[d]_{} \ar[r]^{e} & E \ar[d]^{p} \\
[1] \ar[r]_{f} & B }
\]
 we assume that $g,h : [1] \longrightarrow E$ satisfy $p \circ g = p \circ h = f$ and $g(0)=h(0)=e(*)$.
The lifting property of $p$ for $J_{2}$ implies that 
there exists $w : g(1) \longrightarrow h(1)$ such that $w \circ g =h$ and $p \circ w = 1$. 
Then $w$ is a morphism in $p^{-1}(f(1))$. 
However, $p^{-1}(f(1))$ has only identity morphisms, thus $w=1$. Therefore, $g=h$.
\end{proof}
\end{proposition}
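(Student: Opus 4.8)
The plan is to prove both implications by unwinding the unique-lifting characterizations already in hand. For the forward direction, Lemma \ref{covfib} already shows a covering is a fibration in $\mathbf{Cat}_{1}$, so it remains to observe that the fibers are discrete: a non-identity morphism $w$ in $p^{-1}(b)$ would be a lift of the identity $1_{b}$ over $1:\ast\rarrow[1]$ (or $0:\ast\rarrow[1]$), and since the identity functor on $[1]$ supplied by the identity $1_{b}$ in the base already has the canonical identity lift, uniqueness of the right lifting property against $0:\ast\rarrow[1]$ and $1:\ast\rarrow[1]$ forces $w$ to be an identity. I would phrase this by fixing $w:e\rarrow e'$ with $p(w)=1_{b}$, noting $e,e'$ both lie over $b$, and comparing the two lifts $w$ and $1_{e}$ (resp.\ $1_{e'}$) of the appropriate square; unique lifting collapses them, giving $e=e'$ and $w=1_{e}$.

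For the converse, suppose $p$ is a fibration in $\mathbf{Cat}_{1}$ with all fibers $p^{-1}(b)$ discrete. Being a fibration, $p$ has the right lifting property for $J\supseteq J_{1}=\{\ast\rarrow[1],\ast\rarrow[1]^{op}\}$, which already gives \emph{existence} of lifts against $0:\ast\rarrow[1]$ and $1:\ast\rarrow[1]$; the only thing to establish is \emph{uniqueness}. I would argue uniqueness of lifts against $0:\ast\rarrow[1]$ as follows. Given a square with $\ast\mapsto e$ on top and $g:[1]\rarrow B$ on the bottom with $g(0)=p(e)$, suppose $\widetilde g_{1},\widetilde g_{2}:[1]\rarrow E$ are two lifts with $\widetilde g_{i}(0)=e$. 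They select morphisms $\alpha_{i}:e\rarrow \widetilde g_{i}(1)$ with $p(\alpha_{i})=g(0\rarrow 1)$. Now feed the configuration $\widetilde g_{1}(1)\xleftarrow{\alpha_{1}} e \xrightarrow{=} e$ (an instance of $I_{2}\rarrow E$) into the lifting property for $J_{2}=\{I_{2}\rarrow[2]\}$, using on the base the $2$-simplex of $B$ determined by $g(0\rarrow1)$ and the identity at $g(1)$; this produces a morphism $w:\widetilde g_{1}(1)\rarrow \widetilde g_{2}(1)$ with $p(w)=1_{g(1)}$ and $w\circ\alpha_{1}=\alpha_{2}$. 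Since $p^{-1}(g(1))$ is discrete, $w$ must be an identity, whence $\widetilde g_{1}(1)=\widetilde g_{2}(1)$ and $\alpha_{1}=\alpha_{2}$, i.e.\ $\widetilde g_{1}=\widetilde g_{2}$. The symmetric argument with $J_{2}^{op}$ (or directly with $1:\ast\rarrow[1]$ and $I_{2}^{op}\rarrow[2]^{op}$) handles uniqueness of lifts against $1:\ast\rarrow[1]$. Combining existence (from $J_{1}$) with uniqueness (from discreteness via $J_{2}$) shows $p$ has the \emph{unique} right lifting property for $0:\ast\rarrow[1]$ and $1:\ast\rarrow[1]$, i.e.\ $p$ is a covering by Definition \ref{cove}.

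I would present the argument essentially in the order: (i) recall from Lemma \ref{covfib} that coverings are fibrations, (ii) show coverings have discrete fibers, (iii) conversely, get existence of lifts from the $J_{1}$-lifting property of a fibration, (iv) extract a fiber morphism $w$ comparing two lifts via the $J_{2}$-lifting property, (v) kill $w$ using discreteness of the fiber, and (vi) conclude. The main obstacle — and the step worth writing out carefully rather than waving at — is step (iv): correctly setting up the $I_{2}\rarrow[2]$ square whose lift witnesses the comparison morphism $w$ between the two candidate lifts, making sure the base $2$-simplex is the degenerate one built from $g(0\rarrow1)$ and $1_{g(1)}$ so that $p(w)=1_{g(1)}$ lands $w$ in the fiber. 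Everything else is a direct translation of definitions, and the $J_{3}$ part of $J$ plays no role here since we only need uniqueness of a single lift, not compatibility of composites.
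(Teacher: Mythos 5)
Your proposal is correct and takes essentially the same route as the paper: the forward direction is Lemma \ref{covfib} plus uniqueness of lifts forcing fibers to be discrete, and the converse gets existence of lifts from $J_{1}$ and uniqueness by extracting a comparison morphism $w$ over an identity via the $J_{2}$-lifting property and killing it with discreteness of the fiber. The only blemish is a harmless slip in writing the $I_{2}$-configuration as $\widetilde g_{1}(1)\xleftarrow{\alpha_{1}} e \xrightarrow{=} e$; it should be the span $\widetilde g_{1}(1)\xleftarrow{\alpha_{1}} e \xrightarrow{\alpha_{2}} \widetilde g_{2}(1)$, exactly as your stated base $2$-simplex and the conclusion $p(w)=1_{g(1)}$, $w\circ\alpha_{1}=\alpha_{2}$ indicate.
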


The functorial factorization in $\mathbf{Cat}_{1}$ is given by the small object argument.
However, the small object argument is too abstract and difficult.
Now, we define another functorial factorization on $\mathbf{Cat}_{1}$ 
which induces the groupoidification in Definition \ref{pi} and  universal covers in Definition \ref{univer}.

\begin{definition} \label{track}
For a functor $f : X \longrightarrow Y$, 
define the category $E_{f}$ as
$$(E_{f})_{0} = \{ (x,y_{*}) \in X_{0} \times \mathrm{Mor}(\pi(Y))_{0}\ |\ f(x)=y_{0}\}$$
and
$$E_{f}((x,y_{*}),(x',y_{*}')) = 
\{ (g_{*},g) \in \pi(X)(x,x') \times Y(y_{n},y'_{m}) \ |\ y'_{*} \circ f(g_{*})  = g \circ y_{*} \in \pi(Y) \}$$
where $y_{n}$ and $y_{m}'$ are the last objects of $y_{*},y_{*}'$, respectively.
When $X=*$, the category $E_{f}$ is precisely  
$\widehat{Y}$ in Definition \ref{univer}.
Define a functor $j : X \longrightarrow E_{f}$ by 
$x \mapsto (x,1_{f(x)})$ and $p : E_{f} \longrightarrow Y$ by 
$(x,y_{*}) \mapsto y_{n}$.
Define $\alpha, \beta : \mathrm{Mor}(\mathbf{Cat}) \longrightarrow \mathrm{Mor}(\mathbf{Cat})$ as $\alpha(f)=j$ and $\beta(f)=p$,
then $(\alpha,\beta)$ is a functorial factorization of $\mathbf{Cat}$.
\end{definition}

\begin{proposition}\label{propa}
The functor $p : E_{f} \longrightarrow Y$ is a fibration in $\mathbf{Cat}_{1}$.
\begin{proof}
Suppose we have the following commutative diagram
\[
\xymatrix{
\ast \ar[d]_{j_{1}} \ar[r]^{\alpha} & E_{f} \ar[d]^{p} \\
[1] \ar[r]_{\beta} & Y. }
\]
Let $\alpha(*)=(x,y_{*})$, then $\beta(0 \longrightarrow 1) : \beta(0)=y_{n} \longrightarrow \beta(1)$.
Define a functor $\gamma : [1] \longrightarrow E_{f}$ by 
$\gamma(0)=\alpha(*)=(x,y_{*})$, $\gamma(1)=(x, \beta(0 \longrightarrow 1) \circ y_{*} )$ and $\gamma(0 \longrightarrow 1) =(1, \beta(0 \longrightarrow 1)).$
It makes the above diagram commutative, then $p$ has the lifting property for $J_{1}$. 

Suppose we have the following commutative diagram
\[
\xymatrix{
I_{2} \ar[d]_{j_{2}} \ar[r]^{\alpha} & E_{f} \ar[d]^{p} \\
[2] \ar[r]_{\beta} & Y, }
\]
the image of $\alpha$ describes the diagram in $E_{f}$ as
$$(x',y_{*}') \stackrel{(g_{*},g)}{\longrightarrow} (x,y_{*}) \stackrel{(h_{*},h)}{\longleftarrow} (x'',y_{*}'').$$
Now $\beta(1 \longrightarrow 2)$ is a morphism from the last object of $y'_{*}$ to $y_{*}''$.
Thus the morphism
$$(h_{*} \circ g_{*}^{-1},\beta(1 \longrightarrow 2)) : (x',y_{*}') \longrightarrow (x'',y_{*}'')$$
gives a functor $[2] \longrightarrow E_{f}$ making the above diagram commutative, then $p$ has the lifting property for $J_{2}$.

Suppose we have the following commutative diagram
\[
\xymatrix{
CS^{1} \ar[d]_{j_{3}} \ar[r]^{\alpha} & E_{f} \ar[d]^{p} \\
[2] \ar[r]_{\beta} & Y, }
\]
the image of $\alpha$ describes the diagram in $E_{f}$ as
\[
\xymatrix
{
(x,y_{*}) \ar@<0ex>[r]^{ (g_{*},g) } & (x',y_{*}') \ar@<1ex>[r]^{(h_{*},h)} \ar@<-1ex>[r]_{(h_{*}',h')} & (x'',y_{*}''). \\
}
\]
Since $ h_{*} \circ g_{*} = h'_{*} \circ g_{*} $ in $\pi(X)$, then 
$$h_{*}=h_{*} \circ g_{*} \circ g_{*}^{-1}=h'_{*} \circ g_{*} \circ g_{*}^{-1}=h'_{*}$$ 
in $\pi(X)$. Moreover, $\beta$ implies that $h=h'$, then $(h_{*},h)=(h_{*}',h')$ 
 and it gives a functor $[2] \longrightarrow E_{f}$ making the above diagram commutative, then $p$ has the lifting property for $J_{3}$.
\end{proof}
\end{proposition}

\begin{proposition}\label{propb}
The functor $j : X \longrightarrow E_{f}$ is a trivial cofibration in $\mathbf{Cat}_{1}$.
\begin{proof}
It is obvious that $j$ is a cofibration, thus it saffices to prove that $j$ is a weak $1$-equivalence.
We will show that $j_{*} : \pi_{0}(X) \longrightarrow \pi_{0}(E_{f})$ is an isomorphism.

We take an element $[x,y_{*}] \in \pi_{0}(E_{f})$. Suppose $(x,y_{*})$ is described by the diagram
$$f(x)=y_{0} \longrightarrow y_{1} \longleftarrow y_{2} \longrightarrow \cdots \longleftarrow y_{n}.$$
Let us consider the next commutative diagram
\[
\xymatrix
{
f(x) \ar[dr] \ar[r]^{=} & f(x) \ar[d] & f(x) \ar[d] \ar[l]_{=} \ar[r]^{=} & \cdots \\
& y_{1} & y_{1} \ar[l]_{=} \ar[r]^{=} & \cdots \\
&& y_{2} \ar[u] \ar[r] \ar[ul] \ar[dr] & \cdots \\
&&& \cdots .
}
\]

When we regard vertical sequences as 
objects in $E_{f}$, the above diagram implies that 
$$j_{*}[x]=[x,1_{f(x)}]=[x,y_{*}].$$
Thus $j_{*}$ is surjective.
On the other hand, for $[x],[y] \in \pi_{0}(X)$, assume that $j_{*}[x]=j_{*}[y]$ in $\pi_{0}(E_{f})$. 
It is obvious that $[x]=[y] \in \pi_{0}(X)$ 
from the definition of the set of morphisms of $E_{f}$. Thus $j_{*}$ is injective.

We will show that the induced map $j_{*} : \pi_{1}(X,x) \longrightarrow \pi_{1}(E_{f},(x,1_{f(x)}))$ is an 
isomorphism for any $x \in X_{0}$.
We take an element
$$\sigma : (x,1_{f(x)}) \stackrel{(g_{1,*},g_{1})}{\longrightarrow} (x_{1},y_{1,*}) \stackrel{(g_{2,*},g_{2})}{\longleftarrow} (x_{2},y_{2,*}) 
\longrightarrow \cdots \stackrel{(g_{n,*},g_{n})}{\longleftarrow} (x_{n},y_{n,*})=(x,1_{f(x)})$$
in $\pi_{1}(X,(x,1_{f(x)}))$. For the 
$i$-th morphism $(g_{i,*},g_{i}) : (x_{i-1},y_{i-1,*}) \longrightarrow (x_{i},y_{i,*})$ (in the case of $i$ odd), 
the morphism $g_{i,*}$ is an element of $\pi(X)(x_{i-1},x_{i})$.
Let 
$$\rho=g_{n,*}^{-1} \circ \cdots \circ g_{2,*}^{-1} \circ g_{1,*} \in \pi_{1}(X,x),$$
then $j_{*}(\rho)=\sigma$ in $\pi_{1}(E_{f})$. 
Therefore $j_{*}$ is surjective. 
Finally, 
$$\textrm{Ker }(j_{*})=\{x_{*} \in \pi_{1}(X,x)\ |\ j_{*}(x_{*})=0 \in \pi_{1}(E_{f},(x,1_{f(x)})\}=0$$
since the both morphisms and compositions in $E_{f}$ are same as $X$. 
Thus $j_{*}$ is injective.
\end{proof}
\end{proposition}

\begin{corollary}
The functorial factorization $(\alpha, \beta)$ in Definition \ref{track} 
satisfies the third axiom of model structure in Definition \ref{modef}. 
\end{corollary}

\begin{corollary}\label{poi}
Let $C$ be a category, and denote by $q : C \longrightarrow *$ the morphism 
from $C$ to the terminal object $*$ in $\mathbf{Cat}$.
Then $\alpha(q)$ is the functor $C \longrightarrow \pi(C)$ in Definition \ref{pi},
where $(\alpha,\beta)$ is the functorial factorization in Definition \ref{track}.
\end{corollary}

\begin{corollary}\label{un}
Let $(C,*)$ be a pointed connected category, and let $k : * \longrightarrow C$ 
be the embedding functor to the base point.
Then $\beta(k)$ is the universal cover $\widehat{C} \longrightarrow C$ in Definition \ref{univer}, 
where $(\alpha,\beta)$ is the functorial factorization in Definition \ref{track}.
\end{corollary}


\subsection{Relations between the $1$-type model category and other model categories}

The model category $\mathbf{Cat}_{1}$ is related to other model categories by the following 
pairs of adjoint functors
\[ 
\xymatrix{
\mathbf{Grd} \ar@<1ex>[r]^{i} & \mathbf{Cat}  \ar@<1ex>[l]^{\pi} \ar@<1ex>[r]^{N}   &\mathbf{SSet} \ar@<1ex>[r]^{|-|} \ar@<1ex>[l]^{c}  & \mathbf{Space} \ar@<1ex>[l]^{S} 
}
\]

\begin{definition}
\label{quiequ}
Let $M$ and $N$ be model categories and let 
$$F : M \Longleftrightarrow N : G$$
be a pair of adjoint functors. We say that $(F,G)$ is a Quillen pair if $F$ preserves cofibrations and 
$G$ preserves fibrations.
Furthermore $(F,G)$ is called a pair of Quillen equivalences 
if for every cofibrant object $X$ in $M$, every fibrant object $Y$ in $N$, and every map 
$f : X \longrightarrow GY$ in $M$, the map $f$ is a weak equivalence in $M$ if and only if the adjoint map 
$f^{\sharp} : FX \longrightarrow Y$ is a weak equivalence in $N$.
\end{definition}

\begin{proposition}[\cite{Hir03}]\label{hir}
Let $M$ and $N$ be model categories and let 
$$F : M \Longleftrightarrow N : G$$ 
be a pair of adjoint functors. Then the following are equivalent:
\begin{enumerate}
\item $(F,G)$ is a Quillen pair.
\item $F$ preserves both cofibrations and trivial cofibrations.
\item $G$ preserves both fibrations and trivial fibrations.
\end{enumerate}
\end{proposition}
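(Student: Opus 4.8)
The plan is to prove the cycle of implications $(1)\Rightarrow(2)\Rightarrow(3)\Rightarrow(1)$, using only the formal adjunction together with the lifting characterizations of the four distinguished classes of maps. The central observation, which I would isolate first as the engine of the whole argument, is the standard adjointness-and-lifting lemma: if $F\dashv G$ with unit/counit, then $F(i)$ has the left lifting property with respect to a map $p$ in $N$ if and only if $i$ has the left lifting property with respect to $G(p)$ in $M$. This is proved by transposing a commutative square across the adjunction: a square with $F(i)$ on the left and $p$ on the right corresponds bijectively to a square with $i$ on the left and $G(p)$ on the right, and a diagonal filler for one transposes to a diagonal filler for the other. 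I would state and prove this transposition lemma at the outset, since every subsequent step is an immediate application of it combined with the model-category axiom that trivial fibrations are exactly the maps with the RLP with respect to cofibrations, and fibrations are exactly the maps with the RLP with respect to trivial cofibrations.

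With that lemma in hand the implications become routine. For $(2)\Rightarrow(3)$: suppose $F$ preserves cofibrations and trivial cofibrations, and let $p$ be a fibration in $N$; to see $G(p)$ is a fibration in $M$ we must show $G(p)$ has the RLP with respect to every trivial cofibration $i$ in $M$; by the transposition lemma this is equivalent to $p$ having the RLP with respect to $F(i)$, which holds because $F(i)$ is a trivial cofibration in $N$ by hypothesis and $p$ is a fibration. The same argument with ``trivial cofibration'' replaced by ``cofibration'' and ``fibration'' by ``trivial fibration'' shows $G$ preserves trivial fibrations. The implication $(3)\Rightarrow(1)$ is a tautology once we observe that $(3)$ in particular says $G$ preserves fibrations, and the dual transposition argument — or simply running $(2)\Leftrightarrow(3)$ symmetrically — gives that $F$ preserves cofibrations, so $(F,G)$ is a Quillen pair; indeed $(1)$ is by definition ``$F$ preserves cofibrations and $G$ preserves fibrations,'' which is the conjunction of half of $(2)$ and half of $(3)$. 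Finally $(1)\Rightarrow(2)$: we are given $F$ preserves cofibrations, and we must upgrade this to preservation of trivial cofibrations; given a trivial cofibration $i$ in $M$, to show $F(i)$ is a trivial fibration... rather, a trivial cofibration, we show $F(i)$ has the LLP with respect to every fibration $p$ in $N$, which by the transposition lemma is equivalent to $i$ having the LLP with respect to $G(p)$; but $G(p)$ is a fibration in $M$ because $G$ preserves fibrations (the other half of $(1)$), and $i$ is a trivial cofibration, so the lift exists.

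The only subtlety — and the step I would flag as requiring the most care — is bookkeeping the ``two halves'' correctly: condition $(1)$ as stated bundles a statement about $F$ with a statement about $G$, so in proving $(1)\Rightarrow(2)$ one genuinely needs to use both halves of $(1)$ (the $F$-half to know $F(i)$ is already a cofibration, the $G$-half to manufacture the fibration $G(p)$ against which to test it), and symmetrically for $(3)$. Once one is careful that each implication draws on exactly the hypotheses available, no real obstacle remains; the argument is purely formal and uses nothing about $\mathbf{Cat}_1$ specifically. Since this is a known result, I would in fact be content to cite \cite{Hir03} for it, as the paper already does, and include the transposition lemma only if a self-contained treatment is wanted.
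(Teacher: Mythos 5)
Your proposal is correct. The paper offers no proof of its own here --- it simply cites \cite{Hir03} --- and your argument is exactly the standard one behind that citation: the adjunction-transposition lemma (a lift for a square against $F(i)$ and $p$ corresponds to a lift for the transposed square against $i$ and $G(p)$) combined with the retract-argument characterizations of the four classes (cofibrations and trivial cofibrations as the maps with the LLP against trivial fibrations, resp.\ fibrations, and dually). One small bookkeeping remark: in your step $(1)\Rightarrow(2)$ the ``$F$-half'' of $(1)$ is not actually needed to show that $F$ sends trivial cofibrations to trivial cofibrations, since having the LLP against all fibrations already characterizes trivial cofibrations; the $F$-half only supplies the other clause of $(2)$, that $F$ preserves cofibrations. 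This does not affect the correctness of the argument.
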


\begin{proposition}The pair of adjoint functors
$$\pi : \mathbf{Cat}_{1} \Longleftrightarrow \mathbf{Grd}_{A} : i$$
is a pair of Quillen equivalences.
\begin{proof}
Since $\pi$ preserves weak equivalences and cofibrations,  Proposition \ref{hir} implies that 
$(\pi,i)$ is a Quillen pair.
Furthermore, $(\pi,i)$ is a pair of Quillen equivalences since the canonical inclusion 
$C \longrightarrow \pi(C)$ is a weak $1$-equivalence for any small category $C$.
\end{proof}
\end{proposition}

\begin{proposition}\label{kan-1}
The pair of adjoint functors
$$c : \mathbf{SSet}_{K} \Longleftrightarrow \mathbf{Cat}_{1} : N$$
is a Quillen pair.
\begin{proof}
Theorem \ref{kan} implies that $N$ preserves fibrations. 
Since a cofibration $i : A \longrightarrow B$ in $\mathbf{SSet}_{K}$ is injective for all dimensions, 
in particular, $i_{0} : A_{0} \longrightarrow B_{0}$ is injective. The map on the set of objects of 
$ci : cA \longrightarrow cB$ coincides with $i_{0} : A_{0} \longrightarrow B_{0}$. Thus $ci$ is a cofibration.
Therefore $c$ preserves cofibrations, and $(c,N)$ is a Quillen pair.

\end{proof}
\end{proposition}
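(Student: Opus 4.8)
The plan is to invoke Proposition~\ref{hir}: it suffices to show that one of the three equivalent conditions holds, and the most convenient is condition~(2), namely that $c$ preserves both cofibrations and trivial cofibrations. Equivalently, by condition~(3), I could show that $N$ preserves fibrations and trivial fibrations. I would combine the two formulations, using condition~(3) for the fibration half and a direct argument for the cofibration half, since each is easiest in its own language.

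First I would handle fibrations: by Theorem~\ref{kan}, $p$ is a fibration in $\mathbf{Cat}_1$ if and only if $N(p)$ is a Kan fibration, so in particular if $p$ is a fibration in $\mathbf{Cat}_1$ then $N(p)$ is a fibration in $\mathbf{SSets}_K$. Hence $N$ preserves fibrations. Next, for cofibrations, let $i : A \rarrow B$ be a cofibration in $\mathbf{SSets}_K$, i.e.\ a levelwise injection; then in particular $i_0 : A_0 \rarrow B_0$ is injective. By the construction of the categorization functor $c$, the object set of $cX$ is $X_0$, so the map on objects induced by $ci$ is exactly $i_0$, which is injective. By Theorem~\ref{them}, the cofibrations in $\mathbf{Cat}_1$ are precisely the functors that are injective on objects, so $ci$ is a cofibration in $\mathbf{Cat}_1$. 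Thus $c$ preserves cofibrations.

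It remains to upgrade one of these to cover the ``trivial'' case. The cleanest route is to observe that $c$ preserves trivial cofibrations as well: a trivial cofibration in $\mathbf{SSets}_K$ has the left lifting property against all Kan fibrations, and since $N$ sends $\mathbf{Cat}_1$-fibrations to Kan fibrations, the adjunction $(c,N)$ transports such a lifting property to a lifting of $ci$ against all $\mathbf{Cat}_1$-fibrations, whence $ci$ is a trivial cofibration. (Alternatively, one checks directly that $c$ applied to each generating trivial cofibration $J'$ of Theorem~\ref{sset1} gives a trivial cofibration in $\mathbf{Cat}_1$, using the explicit identifications $c(\Lambda^n_j \to \Delta[n])$ computed in the proof of Theorem~\ref{them}, then invokes closure under pushouts, transfinite composition, and retracts.) With $c$ preserving cofibrations and trivial cofibrations, Proposition~\ref{hir} immediately yields that $(c,N)$ is a Quillen pair.

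The main obstacle is the trivial-cofibration step: one must be careful that the categorization $c$ of a weak $1$-equivalence between simplicial sets is again a weak $1$-equivalence in $\mathbf{Cat}_1$ — this is where Remark~\ref{mrem} (together with $cN \cong 1_{\mathbf{Cat}}$, Proposition~\ref{cN}) does the real work, and it is cleaner to route through the lifting-property characterization than to argue about $\pi_0$ and $\pi_1$ of $c$ by hand. Everything else is bookkeeping already carried out in the proof of Theorem~\ref{them}.
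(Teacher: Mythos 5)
Your proposal is correct, and its two substantive steps --- $N$ preserves fibrations by Theorem \ref{kan}, and $c$ preserves cofibrations because $(cX)_{0}=X_{0}$ so that $ci$ acts on objects by the injective map $i_{0}$ --- are exactly the paper's proof. The additional trivial-cofibration step is superfluous: Definition \ref{quiequ} defines a Quillen pair precisely by ``the left adjoint preserves cofibrations and the right adjoint preserves fibrations,'' so the paper stops after those two checks; your adjunction-lifting argument for trivial cofibrations is nevertheless valid, though the parenthetical alternative cites $J'$ from Theorem \ref{sset1}, which generates the $1$-type structure on $\mathbf{SSets}$ rather than the Kan-type generating trivial cofibrations (the horn inclusions in all dimensions) that such a cell-complex argument would actually require.
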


We will prove that $\mathbf{Cat}_{1}$ is Quillen equivalent to $\mathbf{SSet}_{1}$.

\begin{definition} \label{homoto}
Let $(X,*)$ be a pointed Kan complex. Two $1$-simplices $x,y \in X_{1}$ satisfying $d_{i}(x)=d_{i}(y)=*$ for $i=0,1$
are called homotopic, denoted by $x \simeq y$, if there exists a 
$2$-simplex $z \in X_{2}$ such that 
$d_{0}z=*$, $d_{1}z=x$ and $d_{2}z=y$.
The fundamental group $\pi_{1}(X,*)$ is defined by 
$$\{x \in X_{1} \ |\ d_{i}(x)=*, i=0,1\}/ \simeq .$$
\end{definition}

\begin{lemma}[\cite{May92}] \label{may}
There exists a group structure on $\pi_{1}(X,*)$ under which 
$$\theta_{*} : \pi_{1}(X,*) \longrightarrow \pi_{1}(S|X|,*) = \pi_{1}(|X|,*)$$
is an isomorphism of groups for a pointed Kan complex $(X,*)$, where $\theta : X \longrightarrow S|X|$ is the counit map of the pair of adjoint functors $(|-|,S)$.
\end{lemma}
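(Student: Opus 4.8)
The plan is to first equip $\pi_1(X,*)$ with its group structure by the standard horn-filling recipe, and then to split the asserted isomorphism into two independent pieces: a direct identification $\pi_1(S|X|,*)\cong\pi_1(|X|,*)$ coming from the adjunction data (paths versus $1$-simplices), and the statement that $\theta_*$ itself is a bijection, which I would deduce from the fact that $\theta$ is a weak equivalence between Kan complexes.

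For the group structure, given $1$-simplices $x,y\in X_1$ with $d_0x=d_1x=d_0y=d_1y=*$, I would invoke the Kan condition for the inner horn $\Lambda^2_1$ whose $2$nd and $0$th faces are $x$ and $y$ to get a $2$-simplex $z$ with $d_2z=x$, $d_0z=y$, and set $[x]\cdot[y]:=[d_1z]$. The routine verifications — independence of the chosen filler and compatibility with the homotopy relation of Definition \ref{homoto}, associativity, the fact that $s_0(*)$ represents the unit (take $z=s_1x$, resp. $z=s_0x$, for the two sides), and existence of inverses (fill the horn $\Lambda^2_0$ with $d_1z=s_0(*)$ and $d_2z=x$) — all follow by filling suitable $2$- and $3$-simplices and chasing the simplicial identities; these computations are exactly the ones carried out in \cite{May92}. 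Because the product is defined purely in terms of horns and faces, any simplicial map between Kan complexes automatically preserves it, so $\theta_*$ will be a group homomorphism as soon as it is shown to be a well-defined map of sets.

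Next I would identify $\pi_1(SY,*)$ with $\pi_1(Y,*)$ for an arbitrary space $Y$, and then specialize to $Y=|X|$ (which is where $S|X|$ is automatically a Kan complex). A $1$-simplex of $SY$ with both faces at $*$ is precisely a loop at $*$ in $Y$; a homotopy in the sense of Definition \ref{homoto} is a map $\Delta^2\to Y$ with the prescribed degenerate face, which restricts on $\Delta^1$ to a path homotopy rel endpoints, and conversely; and the horn-filling product corresponds to concatenation of loops up to reparametrisation. This yields a natural bijection $\pi_1(SY,*)\to\pi_1(Y,*)$ that is a group isomorphism by the preceding remark, which is the equality $\pi_1(S|X|,*)=\pi_1(|X|,*)$ appearing in the statement. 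Finally, to see that $\theta_*$ is an isomorphism I would argue that $\theta\colon X\to S|X|$ is a weak equivalence: the triangle identity for the adjunction $(|-|,S)$ gives $\varepsilon_{|X|}\circ|\theta|=\mathrm{id}_{|X|}$, where $\varepsilon_{|X|}\colon|S|X||\to|X|$ is the counit, and $\varepsilon_{|X|}$ is a weak homotopy equivalence by the classical property of the singular complex, so $|\theta|$ is a weak homotopy equivalence; since both $X$ (by hypothesis) and $S|X|$ are Kan, the simplicial Whitehead theorem makes $\theta$ a simplicial homotopy equivalence, hence $\theta_*$ is bijective. The main obstacle is not any single deep step but the bookkeeping in the first part — proving $[x]\cdot[y]$ well defined and checking the group axioms requires a handful of auxiliary Kan fillings with all face and degeneracy identities lining up correctly — which is precisely the content imported from \cite{May92}; I would either reproduce those fillings explicitly or cite them.
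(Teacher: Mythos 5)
The paper offers no argument for this lemma at all: it is imported verbatim from May's book \cite{May92}, so there is no internal proof to compare yours against. Your sketch is essentially the standard textbook argument (the horn-filling product on $\pi_{1}$ of a Kan complex, the identification of $\pi_{1}(SY,*)$ with loops in $Y$ modulo homotopy rel endpoints, and the unit map $X \to S|X|$ being a weak equivalence), and delegating the filler bookkeeping to \cite{May92} is exactly what the paper itself does, so that is appropriate.

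Two cautions. First, the Whitehead step as you phrase it risks circularity: the usual simplicial Whitehead theorem takes as \emph{input} that the map induces isomorphisms on the combinatorially defined homotopy groups of Kan complexes, which is precisely what you are trying to establish for $\theta$. The clean fix is to use the model-categorical form: in $\mathbf{SSets}_{K}$ (Theorem \ref{Kan-type}) weak equivalences are \emph{by definition} maps whose realization is a weak homotopy equivalence, every simplicial set is cofibrant and Kan complexes are fibrant, so your triangle-identity argument ($\varepsilon_{|X|}\circ|\theta|=\mathrm{id}_{|X|}$ with $\varepsilon_{|X|}$ a weak equivalence) makes $\theta$ a weak equivalence between fibrant--cofibrant objects, hence a simplicial homotopy equivalence; one then only needs the easy lemma that simplicially homotopic maps of Kan complexes induce the same homomorphism on the combinatorial $\pi_{1}$, together with the usual change-of-basepoint bookkeeping, since the homotopy inverse and the homotopies need not preserve the basepoint. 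Second, a terminological point: the map $\theta : X \to S|X|$ that the paper calls the counit is really the unit of the adjunction $(|-|,S)$; your use of the counit $\varepsilon_{|X|} : |S|X|| \to |X|$ and the triangle identity is the correct formulation. With these adjustments your outline is sound and fills in a proof where the paper supplies only a citation.
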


\begin{lemma}\label{gro}
If $X$ is a Kan complex, then $cX$ is a groupoid.
\begin{proof}
A morphism of $cX$ from $a$ to $b$ is a class of sequence $e_{1}e_{2} \cdots e_{n}$ of 1-simplexes of $X$.
There exists $e \in X_{1}$ satisfying $e= e_{1}e_{2}\cdots e_{n}$ in $cX$ since $X$ is a Kan complex.
Furthermore, there exists $d \in X_{1}$ such that $de = a$ and $ed=b$ ,
thus all morphisms of $cX$ are invertible.
\end{proof}
\end{lemma}

\begin{proposition}\label{eta-equ}
The counit map $\eta : X \longrightarrow NcX$ is a weak $1$-equivalence in $\mathbf{SSet}$ if $X$ is a Kan complex.
\begin{proof}
It is obvious that $\eta_{*} : \pi_{0}(X) \longrightarrow \pi_{0}(NcX)$ is the identity map 
because $X_{0}=(NcX)_{0}$.
Now, $cX$ is a groupoid by Lemma \ref{gro} then 
$$\pi_{1}(NcX,x) \cong cX(x,x) = \pi_{1}(X,x).$$
Thus $\eta_{*} : \pi_{1}(X,*) \longrightarrow \pi_{1}(NcX,*)$ is an 
isomorphism.
\end{proof}
\end{proposition}

\begin{corollary}\label{eta-equ'}
The counit map $\eta : X \longrightarrow NcX$ is a weak $1$-equivalence in $\mathbf{SSet}$ for any $X$.
\begin{proof}
By the functorial factorization in $\mathbf{SSet}_{K}$, 
there exists a Kan complex $RX$ and a trivial cofibration $i : X \longrightarrow RX$ in 
$\mathbf{SSet}_{K}$ for $X$. The following diagram
\[
\xymatrix{
X \ar[d]_{i} \ar[r]^{\eta} & NcX \ar[d]^{Nci} \\
RX \ar[r]_{\eta} & NcRX }
\]
is commutative, and $\eta : RX \longrightarrow NcRX$ is 
a weak $1$-equivalence by Proposition \ref{eta-equ} and also $i$ is a weak $1$-equivalence. 
By Proposition \ref{kan-1},
$$c : \mathbf{SSet}_{K} \Longleftrightarrow \mathbf{Cat}_{1} : N$$
is a Quillen pair, therefore $c$ preserves trivial cofibrations by Proposition \ref{hir}.
Then $ci$ is a trivial cofibration in $\mathbf{Cat}_{1}$, in particular, 
$ci$ is a weak $1$-equivalence in $\mathbf{Cat}$. 
Therefore $Nci$ is a weak $1$-equivalence in $\mathbf{SSet}$, and $\eta : X \longrightarrow NcX$ 
is a weak $1$-equivalence.
\end{proof}
\end{corollary}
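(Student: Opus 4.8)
The plan is to reduce to the case of a Kan complex, which is already settled by Corollary \ref{eta-equ}, using a fibrant replacement in the Kan-type model structure together with the naturality of the counit $\eta$. Concretely, for an arbitrary simplicial set $X$, Theorem \ref{Kan-type} supplies a Kan complex $RX$ and a trivial cofibration $i : X \rarrow RX$ in $\mathbf{SSets}_{K}$. Naturality of $\eta : 1_{\mathbf{SSets}} \rarrow Nc$ then produces a commutative square whose horizontal arrows are $\eta_{X} : X \rarrow NcX$ and $\eta_{RX} : RX \rarrow NcRX$ and whose vertical arrows are $i$ and $Nc(i)$.

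Next I would verify that three of the four edges of this square are weak $1$-equivalences. First, $\eta_{RX}$ is a weak $1$-equivalence by Corollary \ref{eta-equ}, since $RX$ is a Kan complex. Second, $i$ is a weak equivalence in $\mathbf{SSets}_{K}$, so $|i|$ is a weak homotopy equivalence; this induces isomorphisms on all homotopy groups, in particular on $\pi_{0}$ and $\pi_{1}$, so $i$ is a weak $1$-equivalence. Third, $Nc(i)$ is a weak $1$-equivalence: by Proposition \ref{kan-1} the pair $(c,N)$ is a Quillen pair between $\mathbf{SSets}_{K}$ and $\mathbf{Cat}_{1}$, hence by Proposition \ref{hir} the left adjoint $c$ sends the trivial cofibration $i$ to a trivial cofibration $c(i)$ in $\mathbf{Cat}_{1}$; in particular $c(i)$ is a weak $1$-equivalence in $\mathbf{Cat}$, and therefore $Nc(i) = N(c(i))$ is a weak $1$-equivalence in $\mathbf{SSets}$ by Remark \ref{mrem}.

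Finally I would invoke two-out-of-three. Weak $1$-equivalences in $\mathbf{SSets}$ are the weak equivalences of the model structure of Theorem \ref{sset1}, hence satisfy two-out-of-three; applying this to the identity $Nc(i) \circ \eta_{X} = \eta_{RX} \circ i$, in which every map except $\eta_{X}$ is already known to be a weak $1$-equivalence, forces $\eta_{X}$ to be a weak $1$-equivalence as well.

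The one step requiring a little care is the passage from ``$i$ is a trivial cofibration in $\mathbf{SSets}_{K}$'' to ``$Nc(i)$ is a weak $1$-equivalence'', since it chains together three separate inputs (that $(c,N)$ is a Quillen pair for the $1$-type structure on $\mathbf{Cat}$, that trivial cofibrations in $\mathbf{Cat}_{1}$ are weak $1$-equivalences, and Remark \ref{mrem} comparing weak $1$-equivalences in $\mathbf{Cat}$ and $\mathbf{SSets}$). Everything else is formal and needs no further simplicial computation beyond what was already carried out in Lemma \ref{eta}.
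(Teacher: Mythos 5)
Your proposal is correct and follows essentially the same route as the paper: fibrant replacement $i : X \rarrow RX$ in $\mathbf{SSets}_{K}$, the naturality square for $\eta$, Corollary \ref{eta-equ} for $\eta_{RX}$, the Quillen pair $(c,N)$ from Proposition \ref{kan-1} together with Proposition \ref{hir} to see $Nc(i)$ is a weak $1$-equivalence, and two-out-of-three to conclude. You merely make explicit a couple of steps the paper leaves implicit (the appeal to Remark \ref{mrem} and the final two-out-of-three argument), which is fine.
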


\begin{theorem}
The pair of adjoint functors
$$c : \mathbf{SSet}_{1} \Longleftrightarrow \mathbf{Cat}_{1} : N$$
is a pair of Quillen equivalences.
\begin{proof}
By Theorem \ref{them}, $(c,N)$ is a Quillen pair.
Suppose $X$ is a cofibrant object in $\mathbf{SSet}_{1}$ and $G$ is a fibrant object in $\mathbf{Cat}_{1}$ 
and $f : X \longrightarrow NG$ is a weak equivalence in $\mathbf{SSet}_{1}$.
The map $f^{\sharp} : cX \longrightarrow G$ is given by  $cX \stackrel{cf}{\longrightarrow} cNG \cong G$.
Now the following diagram
\[
\xymatrix{
X \ar[d]_{\eta} \ar[r]^{f} & NG \ar[d]^{\cong} \\
NcX \ar[r]_{Ncf} & NcNG}
\]
is commutative, then $Ncf$ is a weak equivalence since 
$\eta$ 
is a weak equivalence from Corollary \ref{eta-equ'}. Thus $cf$ is a weak equivalence in $\mathbf{Cat}_{1}$.
Conversely, it is obvious that $f$ is a 
weak equivalence in $\mathbf{SSet}_{1}$ if $f^{\sharp}$ is a weak equivalence in $\mathbf{Cat}_{1}$.
\end{proof}
\end{theorem}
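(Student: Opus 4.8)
The plan is to verify directly the two halves of Definition \ref{quiequ}. That $(c,N)$ is a Quillen pair is essentially free: the $1$-type model structure on $\mathbf{Cat}$ was produced in the proof of Theorem \ref{them} by transferring the $1$-type model structure of $\mathbf{SSets}$ along the adjunction $(c,N)$ by means of Theorem \ref{transfer}, so $c$ carries the generating cofibrations $I'$ and generating trivial cofibrations $J'$ to (trivial) cofibrations and hence preserves all cofibrations and trivial cofibrations; by Proposition \ref{hir} this is the same as saying $(c,N)$ is a Quillen pair. The real engine for the equivalence condition will be Corollary \ref{eta-equ'}, that the map $\eta : X \rarrow NcX$ is a weak $1$-equivalence for \emph{every} simplicial set $X$, together with the isomorphism $cN \cong 1_{\mathbf{Cat}}$ of Proposition \ref{cN}.

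So I would fix a cofibrant object $X$ of $\mathbf{SSets}_{1}$, a fibrant object $G$ of $\mathbf{Cat}_{1}$ --- which by Proposition \ref{fibrant} is a groupoid --- and a map $f : X \rarrow NG$, and compare $f$ with its adjunct $f^{\sharp} : cX \rarrow G$, which factors as $cX \stackrel{cf}{\rarrow} cNG \stackrel{\cong}{\rarrow} G$ using the counit isomorphism of Proposition \ref{cN}. By naturality of $\eta$ the square
$$\bfig
\square[X`NG`NcX`NcNG;f`\eta`\cong`Ncf]
\efig$$
commutes, its right-hand vertical $\eta_{NG}$ being an isomorphism because $cNG \cong G$, and its left-hand vertical $\eta_{X}$ being a weak $1$-equivalence by Corollary \ref{eta-equ'}. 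Two-out-of-three then gives: $f$ is a weak equivalence in $\mathbf{SSets}_{1}$ iff $Ncf$ is a weak $1$-equivalence in $\mathbf{SSets}$, which by Remark \ref{mrem} holds iff $cf$ is a weak $1$-equivalence in $\mathbf{Cat}$, which, since $cf$ and $f^{\sharp}$ differ only by the isomorphism $cNG \cong G$, holds iff $f^{\sharp}$ is a weak equivalence in $\mathbf{Cat}_{1}$. This chain of equivalences is exactly the condition demanded by Definition \ref{quiequ}, and it settles both directions simultaneously.

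I do not expect a genuine obstacle: the substantive work is already packaged in Corollary \ref{eta-equ'} (and behind it Lemmas \ref{may} and \ref{eta} on the fundamental group of a Kan complex). The two points that need a little attention are, first, that the square above really commutes and that its right vertical arrow is an isomorphism --- this is the relevant triangle identity for the adjunction $c \dashv N$ combined with $cN \cong 1_{\mathbf{Cat}}$ --- and second, that $N$ both preserves and reflects weak $1$-equivalences, which is precisely Remark \ref{mrem}. Once these are noted, the proof is a purely formal two-out-of-three manipulation.
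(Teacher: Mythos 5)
Your proposal is correct and follows essentially the same route as the paper: establish the Quillen pair via the transferred model structure, then use the naturality square for $\eta$, Corollary \ref{eta-equ'}, the isomorphism $cN \cong 1_{\mathbf{Cat}}$, and two-out-of-three to compare $f$ with $f^{\sharp}$. Your treatment is in fact slightly more careful than the paper's, since you justify that the right-hand vertical is an isomorphism and handle the converse direction explicitly rather than calling it obvious.
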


\section*{Acknowledgement}
I would like to thank Professor Dai Tamaki and Katsuhiko Kuribayashi for helpful advices about this paper. 
Their several suggestions and remarks were essential in the development and revisions of this work.
I am also grateful to Professor Hideto Asashiba for conversations. His view points and suggestions really helped me.

\end{document}